\documentclass[12pt,twoside]{article}

\usepackage{graphicx,graphics, amsthm, amsfonts, amsbsy,amssymb, amsmath, cite, array,arydshln, hyperref, float,tikz}
\usepackage[utf8]{inputenc}

\let\OLDthebibliography\thebibliography
\renewcommand\thebibliography[1]{
	\OLDthebibliography{#1}
	\setlength{\parskip}{0pt}
	\setlength{\itemsep}{0pt plus 0.3ex}
}

\everymath{\displaystyle}

\usepackage[margin=1in]{geometry}

\allowdisplaybreaks

\newtheorem{theorem}{Theorem}[section]

\newtheorem{lemma}[theorem]{Lemma}
\newtheorem{corollary}[theorem]{Corollary}

\newtheorem{proposition}[theorem]{Proposition}

\renewenvironment{proof}{{\bfseries Proof.}}{\qed}

\begin{document}
\title{Independent domination polynomial of comaximal graphs of commutative rings}
\author{ 
	Bilal Ahmad Rather\\ 
	{\small \it School of Mathematics and Statistics, Shandong University of Technology, Zibo 255049, China}\\
	\texttt{bilalahmadrr@gmail.com}
           }
\date{}

\pagestyle{myheadings} \markboth{Bilal Ahmad Rather}{Independent domination polynomial of comaximal graphs of commutative rings}
\maketitle
\vskip 5mm
\begin{abstract}
The comaximal graph $ \Gamma(R) $ of a commutative ring $R$ is a simple graph with vertex set $ R $ and two distinct vertices $ a $ and $b $ of $ \Gamma(R) $ are adjacent if and only if $ aR+bR=R $, where $ aR $ is the ideal generated by $ a $ in $ R $. 
In this article, the independent domination polynomial $ D_{i}(\Gamma(\mathbb{Z}_{n}),x) $ of $ \Gamma(\mathbb{Z}_{n}) $ is discussed, along with its unimodal and log-concave properties for certain values of $n$. 
Some auxiliary results related to $D_{i}(\Gamma(\mathbb{Z}_{n}),x)$ are presented in terms of their zeros.
In addition, we determine the independence polynomial $ I(\Gamma(\mathbb{Z}_{n}),x ) $ of $ \Gamma(\mathbb{Z}_{n}) $ for special values of $n$ and provide a general result associated with it. The bounds for the zero of  the polynomial $ I(\Gamma(\mathbb{Z}_{n}),x ) $ are established, and their log-concave and unimodal properties are examined.
\end{abstract}

\noindent{\footnotesize Keywords: Independence polynomial; independent domination polynomial; comaximal graphs; unimodal; log-concave; zeros}

\vskip 3mm
\noindent {\footnotesize AMS subject classification: 05C31; 05C25, 05C69, 12D10.}

\section{Introduction}
\paragraph{}
Let  $G=G(V,E)$ be a finite simple undirected graph with vertex set $V=\{v_{1},v_{2},\ldots,v_{n}\}$ and the set of unordered pairs of vertices is its edge set $E$. The number of vertices in $ V $ is the \emph{order} $ n $ of $ G $. The number of edges in $E $ is the \emph{size} $ m $ of $ G $.  Two adjacent vertices $ u $ and $ v $ are represented by $ u\sim v $. A vertex of degree $ 0 $ is said to be \textit{isolated} vertex. The set of vertices adjacent to $v\in V$ excluding $ v $, denoted by $N_{G}(v)$, is the \textit{neighbourhood} of $v$ (sometimes known as open neighbourhood). The \textit{degree} $ d_{v_{i}} $ (or simply $ d_{i} $) of the vertex $ v_{i} $ is the number of elements in the set $N_{G}(v_{i})$. A graph $ G $ is said to be $ r $-\textit{regular} if $ d_{i}=r $, for all $ i=1,\dots,n. $ The \textit{union} $ G_{1}\cup G_{2}$ of two graphs $ G_{1}=G_{1}(V_{1},E_{1}) $ and $ G_{2}=G_{2}(V_{2}, E_{2}) $ is the graph with vertex set $ V_{1}\cup V_{2} $ and edge set $ E_{1}\cup E_{2}. $ The \textit{join}  $ G_{1}\vee G_{2} $ of two graphs $ G_{1} $ and $ G_{2} $ with disjoint vertex sets $ V_{1} $ and $ V_{2} $ is the graph $ G_{1}\cup G_{2} $ together with all the edges joining each vertex of $ V_{1} $ to every vertex of $ V_{2}. $ A \textit{clique} in a graph $G$ is a subset of vertices in which every pair is mutually adjacent. The \textit{complement} of $G$, denoted by $\overline{G}$ is a graph such that two vertices in $\overline{G}$ are adjacent if and only if they are not so in $G.$
A non empty set $ S\subseteq V $ is said to be a \emph{dominating} set if every vertex in $ V\setminus S $ is adjacent to at least one  vertex in $ S. $ The minimum cardinality among all dominating sets of $ G $ is said to be the \emph{domination number} of $ G, $ denoted by $ \gamma(G). $ The domination theory of graphs is very well developed, see textbook \cite{haynes}. An \emph{independent} set in a graph $ G $ is a set of pairwise non-adjacent vertices and the cardinality of the largest independent set is called the \emph{independence number} of $ G, $ denoted by $ \alpha(G). $ An \textit{independent dominating} set of $ G $ is a vertex subset that is both dominating and independent in $ G $. The \textit{independent domination number} $ \gamma_{i}(G) $ is the minimum size of all the independent dominating sets of $ G. $ The invariants $ \gamma, \alpha $ and $ \gamma_{i} $ of $ G $ are related by the inequality  $ \gamma(G)\leq \gamma_{i}(G) \leq \alpha(G)$ \cite{haynes}. Let $ d_{i}(G,k) $ denote the number of independent dominating sets of $ G $ with cardinality $ k $. The independent domination polynomial of $ G $ is defined as
\[ D_{i}(G,x)=\sum_{k=\gamma_{i}(G)}^{\alpha(G)}d_{i}(G,k)x^{k}. \]
A root of the equation $ D_{i}(G,x)=0 $ is known as the independent domination root of $ G. $ The independent domination polynomial $ D_{i}(G,x) $ is a generating function of the number of the independent dominating sets of certain cardinalities of $ G $. 
 The independent domination polynomials and their zeros have attracted many researchers. Alwardi Shivaswamy and Soner \cite{Alwardi} presented results related to $D_{i}(G,x),$ while Dod \cite{Dod} discussed properties of the independent domination polynomial and some interesting connections to well known counting problems. A detailed survey of the independent domination and its recent advancements can be seen in \cite{akbari,goddard}. Jahari and Alikhani \cite{jahari} investigated the independent domination polynomial of some generalized compound graphs.   Lonzaga \cite{lonzaga} found the independent domination polynomial of corona of some special graphs. The independent domination polynomials of zero divisor graphs associated to finite commutative rings were studied in \cite{gursoy} and later the results were modified and their zeros were addressed in \cite{bilalarxiv}. Further results related to graph polynomials of other algebraic structures can be seen in \cite{bilal,domzero}.

The independence polynomial of $ G $, denoted by $ I(G, x) $, is defined as
\[ I(G, x)=\sum_{k=1}^{\alpha (G)}c_{k}x^{k}, \]
where $ c_{k} $ is the number of independent sets of order $ k $ in $ G $.  The independent domination number and the independent number problems are known to be NP-complete for graphs. Independence polynomial of the corona of graphs are given in \cite{levitdiscrete}, their detailed survey and open problem, conjecture is presented in \cite{levitdiscrete,levitsymmetry}. Mandrescu \cite{mandrescu} discussed unimodality of some independence polynomials via their palindromicity. Unimodality and other properties of independence polynomials for some graphs can be seen in \cite{ alavi,  wang}. A survey of the independence polynomial of a graph can be seen in \cite{levitsurvey}.

For a commutative ring $ R $ with identity $ 1\neq 0, $ Shamra and Bhatwadekar \cite{sharma} introduced comaximal graph $ \Gamma(R) $ of $ R $ with vertex set $ R $, and distinct $ a $ and $ b $ in $ \Gamma(R) $ are adjacent if and only if $ aR+bR=R,$ where $ xR $ is the ideal generated by $ x\in R. $ An integral modulo ring of order $n$ is denoted by $\mathbb{Z}_{n}.$ Several properties of rings were related to its associated comaximal graphs, like $ R $ is finite ring if and only if chromatic number of $ \Gamma(R) $ is finite, chromatic number of $ \Gamma(R) $ equals the number of the maximal ideals and the number of units of $ R $ (see, \cite{sharma}). For spectral theories of comaximal graphs, see \cite{banerjeeMatrices, afkhami, bilalijpam, bilaljaa}. Cut vertices of comaximal graphs of a commutative Artinian ring were studied in \cite{esmaili}. For some other properties about comaximal graphs of $ R $, see \cite{sinha, samei} and the references therein. Homological invariants of edge rings of comaximal graph can be seen in \cite{bilalca}.

Motivated by the investigation of the independent domination polynomial, along with unimodality, and log-concavity properties for zero-divisor graphs of commutative rings, as well as other algebraic, spectral, and homological invariants of comaximal graphs of rings.
 We continue to investigate the intriguing properties of comaximal graphs of commutative rings and the problem of determining their independence polynomial and the independent domination polynomial, with special focus on their unimodal and log-concave properties and their zeros.

The complete graph is denoted by $K_{n}$, its complement by $ \overline{K}_{n} $, and the complete bipartite by $ K_{a,n-a} $. For additional undefined terms and symbols, readers are directed to \cite{ping, haynes}.

The rest of the paper is organized as follows: The independent domination polynomial of the comaximal graph $ \Gamma(\mathbb{Z}_{n}) $ is covered in Section \ref{section 2}. 
We demonstrate that the independent domination polynomial of $ \Gamma(\mathbb{Z}_{n}) $ is unimodal and log-concave for $ n=p^{n_{1}}q^{n_{2}}r^{n_{3}}, $ where $ p, q, r$ are primes and $ n_{1}, n_{2}, n_{3} $ are non-negative integers.
Section \ref{section 3} determines the independence polynomial of $ \Gamma(\mathbb{Z}_{n}) $ for $ n=p^{n_{1}}q^{n_{2}} $.
  The Eneström-Kakeya theorem is utilized to present the bounds for zeros of $I(\Gamma(\mathbb{Z}_{n}),x)$ for some values of $n$. The log-concave and unimodal properties of $I(\Gamma(\mathbb{Z}_{n}),x)$  are examined.
Finally, in conclusion, a few comments about future work are put forward.

\section{Independent domination polynomial of comaximal graphs of integer modulo ring} \label{section 2}
\paragraph{}

For a positive integer $ n $, let $ \tau(n) $ denote the number of positive factors of $ n $ and let $n=p_{1}^{n_{1}}p_{2}^{n_{2}}\dots p_{r}^{n_{r}} $ be its \emph{canonical decomposition}, where $n_{1},n_{2},\dots,n_{r} $ are non negative integers, $r$ is positive and $ p_{1},p_{2},\dots,p_{r} $ are primes. Then
\begin{equation}\label{divisors of n}
	\tau(n)= (n_{1}+1) (n_{2}+1)\dots(n_{r}+1)=\prod_{i=1}^{r}(n_{i}+1).
\end{equation}

The \emph{Euler's totient function} $ \phi(n) $ denotes the number of positive integers less or equal to $ n $ and relatively prime to $ n $. Then, it is known that
$ \sum\limits_{d|n}\phi(d)=n, $ where $ d|n $ represents $ d $ divides $ n. $ Furthermore, if $p$ and $q$ are prime, then $\phi(pq)=\phi(p)\phi(q)$. Also, we note that $\sum_{i=1}^{\ell}\phi(p^{i})=p^{\ell}-1.$

An integer $ d $ is a proper divisor of $ n $ if $ d $ divides $ n $  and $ d\notin \{1,n\} .$ Let $ d_{1}, d_{2},\dots,d_{t} $ be the distinct proper divisors of $ n. $ Let $G_{n}$ be a simple graph with vertex set $ \{d_{1}, d_{2},\dots,d_{t}\} $ in which two distinct vertices $ d_{i} $ and $ d_{j} $ are adjacent  if and only if they are relatively prime, that is, $ (d_{i},d_{j})=1 $, for $ 1\leq i<j\leq t $, where $ (x,y) $ is the greatest common factor of $ x $ and $ y $. If the prime power factorization of $n$ is $ n=\prod_{i=1}^{r}p_{i}^{n_{i}}, $ where $ r,n_{1}\leq n_{2}\leq \dots\leq n_{r} $ are positive integers and $ p_{1}< p_{2}< \dots< p_{r} $ are prime numbers, then from \eqref{divisors of n}, it follows that the size of graph $G_{n}$ is  $ |V( G_{n})|=\prod\limits_{i=1}^{r}(n_{i}+1)-2,$ since $1$ and $n$ are not in $V(G_{n}).$  The graph $ G_{n} $ plays an important role in understanding the structure of $ \Gamma(R) $ when $R\cong \mathbb{Z}_{n}.$

For $ 1\leq i \leq t $, let
\[ A_{d_{i}}= \{ x\in \mathbb{Z}_{n} : (x,n)=d_{i} \}, \] where $ (x,n) $ denotes the greatest common divisor of $ x $ and $ n. $
We observe that $ A_{d_{i}} \cap A_{d_{j}}=\emptyset$, when $ i\neq j $, implying that the sets $ A_{d_{1}}, A_{d_{2}}, \dots, A_{d_{t}} $ are mutually disjoint and partitions the vertex set of $ \Gamma(\mathbb{Z}_{n}) $ as
\[
V(\Gamma(\mathbb{Z}_{n}))=A_{d_{1}}\cup A_{d_{2}}\cup \dots \cup A_{d_{t}}\cup \{0\} \cup U(\mathbb{Z}_{n}),
\]
where $ U(\mathbb{Z}_{n})=\{x\in \mathbb{Z}_{n}: (x,y)=1 \}. $ We recall that there are $ \phi(n) $ elements in $U(\mathbb{Z}_{n}).$ We note that the elements of cell $ A_{d_{i}} $ are of the form $ xd_{i} $, where $ x $ and $ \frac{n}{d_{i}} $ are relatively prime. Young \cite{my} calculated the cardinality of $ A_{d_{i}} $ as $\phi\left( \frac{n}{d_{i}}\right)$, for $ 1\leq i \leq t .$ The induced subgraphs $ \Gamma(A_{d_i}) $ of $ \Gamma(\mathbb{Z}_{n}) $ are complements of certain  cliques (see \cite{banerjeeMatrices, afkhami}). In particular, we have
	\begin{enumerate}
		\item $x_{i}\in A_{d_{i}} $ is adjacent to $ x_{j}\in A_{d_{j}} $ if and only if $ (d_{i},d_{j})=1. $
		\item If $ v_{i}\in A_{d_{i}} $ is adjacent to $ v_{j}\in A_{d_{j}} $ for some $ i\neq j $, then $ v_{i} $ is adjacent to every $ v_{j}\in A_{d_{j}} $.
		\item No two members of the set $ A_{d_{i}} $ are adjacent, for each $ d_{i}. $			
	\end{enumerate}

The following lemma allows us to write $ \Gamma(\mathbb{Z}_{n}) $ as joined union of certain complete graphs and null graphs. We recall that joined union is the generalization of graph operation join, defined as in introduction.
\begin{lemma}[\cite{afkhami, banerjeeMatrices}] \label{joined union of comaximal graph} For the positive integer $ n $ and its proper divisor $ d_{i} $, the following hold.
	\begin{itemize}
		\item[\bf (i)] For each $ i=1,2,\dots,t $, $ \Gamma(A_{d_{i}}) $ is isomorphic to $ \overline{K}_{\phi\left (\frac{n}{d_{i}} \right )} $, where $ \Gamma(A_{d_{i}}) $ is induced subgraph of $ A_{d_{i}}. $
		\item[\bf (ii)] The comaximal graph of $ \mathbb{Z}_{n} $ is
		\[ \Gamma(\mathbb{Z}_{n})=K_{\phi(n)}\vee \left(K_{1}\cup G_{2} \right), \]
		where $ G_{2}= G_{n}\Big [\overline{K}_{\phi\left(\frac{n}{d_{1}}\right)},\overline{K}_{\phi\left(\frac{n}{d_{2}}\right)},\dots,\overline{K}_{\phi\left(\frac{n}{d_{t}}\right)}\Big]$ is the joined union of graphs $\overline{K}_{\phi\left(\frac{n}{d_{i}}\right)}$ for $1\leq i\leq t.$
	\end{itemize}
\end{lemma}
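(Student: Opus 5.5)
The plan is to work directly from the definition $a\sim b \iff aR+bR=R$ in $R=\mathbb{Z}_{n}$. First I reduce adjacency to a gcd condition: for $a,b\in\mathbb{Z}_{n}$, $a\mathbb{Z}_{n}+b\mathbb{Z}_{n}=\mathbb{Z}_{n}$ holds if and only if $\gcd(a,b,n)=1$. The reason is that the ideal generated by $a$ and $b$ in $\mathbb{Z}_{n}$ is the image of $(a,b,n)\mathbb{Z}$. If $(x,n)=d_{i}$ and $(y,n)=d_{j}$, then $\gcd(x,y,n)=\gcd(d_{i},d_{j})$, because $\gcd(x,y,n)=\gcd(\gcd(x,n),\gcd(y,n))$. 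This single identity yields all three listed facts: the cell membership alone decides adjacency (facts 1 and 2), and two elements of the same cell $A_{d_{i}}$ with $d_{i}\neq 1$ have gcd $d_{i}>1$, so they are non-adjacent (fact 3).

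For part (i), I first use Young's count $|A_{d_{i}}|=\phi(n/d_{i})$, which holds because $x=d_{i}y$ with $0\le y<n/d_{i}$ and $\gcd(y,n/d_{i})=1$. Fact 3 then says the induced subgraph on $A_{d_{i}}$ has no edges, so $\Gamma(A_{d_{i}})\cong\overline{K}_{\phi(n/d_{i})}$.

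For part (ii), I treat the remaining cells separately.
\begin{itemize}
\item Units ($d=1$): any unit $u$ gives $u\mathbb{Z}_{n}=\mathbb{Z}_{n}$, so $u$ is adjacent to every other vertex, including $0$ and all other units. Hence $U(\mathbb{Z}_{n})$ induces $K_{\phi(n)}$, joined to everything else.
\item Zero ($d=n$): since $0\cdot\mathbb{Z}_{n}+b\mathbb{Z}_{n}=b\mathbb{Z}_{n}$, the vertex $0$ is adjacent exactly to the units. Within the non-units it is therefore an isolated $K_{1}$.
\item Proper-divisor cells: among $A_{d_{1}}\cup\dots\cup A_{d_{t}}$, adjacency between cells is governed by coprimality of the $d_{i}$, which is exactly the adjacency of $G_{n}$, and each cell is an independent set. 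By definition of the joined union, this subgraph is $G_{n}[\overline{K}_{\phi(n/d_{1})},\dots,\overline{K}_{\phi(n/d_{t})}]=G_{2}$.
\end{itemize}
Putting these together with the partition of $V(\Gamma(\mathbb{Z}_{n}))$ gives $K_{\phi(n)}\vee(K_{1}\cup G_{2})$.

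The only step needing care is the ideal computation $a\mathbb{Z}_{n}+b\mathbb{Z}_{n}=\gcd(a,b,n)\mathbb{Z}_{n}$, together with the check that the gcd identity handles the boundary cells $d=1$ and $d=n$ consistently. Everything else is bookkeeping.
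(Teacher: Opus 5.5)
Your proof is correct and follows the same structure the paper sets up just before the lemma: the units, $\{0\}$ and the cells $A_{d_{i}}$ partition the vertex set, Young's count gives $|A_{d_{i}}|=\phi(n/d_{i})$, and the three listed adjacency facts then assemble into $K_{\phi(n)}\vee(K_{1}\cup G_{2})$. The paper gives no argument of its own and cites the literature for these facts, whereas you derive all of them from $a\mathbb{Z}_{n}+b\mathbb{Z}_{n}=\gcd(a,b,n)\mathbb{Z}_{n}$ and $\gcd(x,y,n)=\gcd(\gcd(x,n),\gcd(y,n))$, so your version is self-contained.
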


The following result gives the iterated formula for the  independent domination polynomial of comaximal graph of $ \mathbb{Z}_{n}. $
\begin{theorem}\label{ind dom of G(zn)}
Let $ G\cong \Gamma(\mathbb{Z}_n) $ be a comaximal graph of order $ n. $ Then the following hold.
\begin{itemize}
	\item [\bf (i)] If $ n $ is product of distinct primes, then the independent domination polynomial of $ G $ is
	\[ \phi(n)x+xg(x), \]
	where $ g(x) $ is the independent domination polynomial of $ G_{2}. $
	\item[\bf (ii)] If $ n=p_{1}^{n_{1}}p_{2}^{n_{2}}\dots p_{k}^{n_{k}}, $ then the independent domination polynomial of $ G $ is
	\[ \phi(n)x+x^{p_{1}^{n_{1}-1}p_{2}^{n_{2}-1}\dots p_{k}^{n_{k}-1}}f(x), \]
	where $ f(x) $ is the independent domination polynomial of the connected component $ G_{2}. $
\end{itemize}
\end{theorem}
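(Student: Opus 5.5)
The plan is to read everything off the join decomposition $\Gamma(\mathbb{Z}_n)=K_{\phi(n)}\vee(K_{1}\cup G_{2})$ of Lemma~\ref{joined union of comaximal graph}(ii). The first step is to split the independent dominating sets of $G$ according to whether they contain a unit. A unit $u$ is adjacent to every other vertex, since $uR+bR=R$ for all $b$. Hence $\{u\}$ is an independent dominating set, and any independent set containing $u$ equals $\{u\}$. This gives exactly $\phi(n)$ such sets, all of size $1$, and accounts for the term $\phi(n)x$.

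Next I will treat independent dominating sets $S$ with $S\cap U(\mathbb{Z}_n)=\emptyset$. Every unit is adjacent to every non-unit, so the units are automatically dominated by any nonempty such $S$. Thus $S$ is independent dominating in $G$ if and only if it is independent dominating in the subgraph $K_{1}\cup G_{2}$ induced on the non-units. The key observation is that an isolated vertex of $K_1\cup G_2$ lies in every dominating set of that subgraph. For (i), the only isolated vertex is $0$, because $0R+bR=bR=R$ iff $b$ is a unit. It remains to check that $G_2$ has no isolated vertex when $n$ is squarefree and has at least two prime factors. This holds because each proper divisor $d_i$ has a coprime proper divisor, namely $n/d_i$. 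So these sets are exactly $\{0\}\cup T$ with $T$ an independent dominating set of $G_{2}$, which gives the term $x\,g(x)$. Since the independent domination polynomial of a disjoint union is the product of the polynomials of its components, this step is just $D_i(K_1,x)\,D_i(G_2,x)=x\,g(x)$.

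For (ii), I will identify all isolated vertices of $K_{1}\cup G_{2}$. A non-unit $a$ with $(a,n)=d$ is adjacent to some non-unit iff some proper divisor coprime to $d$ exists, that is, iff $d$ misses at least one prime $p_j$ of $n$. So the isolated vertices are precisely the nilpotent elements, namely those $a$ with $p_1p_2\cdots p_k\mid a$, together with $0$. There are $n/(p_1\cdots p_k)=p_1^{n_1-1}\cdots p_k^{n_k-1}$ of them. These vertices are forced into every independent dominating set, and they are pairwise nonadjacent. Deleting them leaves the remaining part of $G_{2}$, which I will argue is connected; this is the component the statement calls $G_{2}$ with polynomial $f(x)$. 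Multiplicativity over disjoint unions then gives $x^{p_1^{n_1-1}\cdots p_k^{n_k-1}}f(x)$, and adding the unit term finishes the proof. Part (i) is recovered as the case with all $n_i=1$.

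The main obstacle is not the counting but making the statement precise. I have to verify that the non-nilpotent non-units really form a single connected piece: any two such vertices are joined via the cells $A_{p_j^{n_j}}$-type divisors coprime to each. I also have to fix the convention $f(x)=1$ when $n$ is a prime power, where that piece is empty. I would handle connectivity by noting that each non-nilpotent cell $A_{d}$ is adjacent to $A_{p_j}$ for any prime $p_j\nmid d$. The cells $A_{p_j}$ are themselves pairwise adjacent when $k\ge 2$.
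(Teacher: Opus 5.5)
Your proposal is correct and takes essentially the same approach as the paper: you split off the $\phi(n)$ unit singletons using $K_{\phi(n)}\vee(K_1\cup G_2)$, then observe that the isolated vertices of $K_1\cup G_2$ (only $0$ in the squarefree case, and all $p_1^{n_1-1}\cdots p_k^{n_k-1}$ nilpotents in general) lie in every remaining independent dominating set and factor out multiplicatively. Your version is more careful than the paper's, which asserts without proof that the nilpotents are exactly the isolated vertices, that the remaining part is connected, and does not state the convention $f(x)=1$ when $n$ is a prime power; you justify all three.
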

\noindent\begin{proof}
	Let $ \mathbb{Z}_{n} $ be a finite commutative ring of order $ n $ and  $G\cong \Gamma(\mathbb{Z}_{n}) $ be its comaximal graph. Since the  invertible  $ \phi(n) $ elements of $ \mathbb{Z}_{n} $ form the clique and are adjacent to every other vertex of $ \Gamma(\mathbb{Z}_{n}) $. So, by Lemma \ref{joined union of comaximal graph}, the structure of $ \Gamma(\mathbb{Z}_{n}) $ is
\begin{align*}
	 \Gamma(\mathbb{Z}_{n})=K_{\phi(n)}\vee \left(K_{1}\cup G_{2} \right),
\end{align*}
where $ G_{2}$ is the graph defined above. Now, assume that $ n=p_{1}p_{2}\dots p_{k} $ be the product of distinct primes. Then $ G_{n} $ is connected (Theorem 5.1, \cite{banerjeeMatrices}), so $ G_{2} $ is a connected graph. Now, it is clear that each single vertex of $ K_{\phi(n)} $ form an independent domination set, since each vertex is of degree $ n-1. $ Also, the vertex of $ K_{1} $ dominates the vertices of $ K_{\phi(n)} $, so $ K_{1} $ and any other independent dominating sets of $ G_{2} $ dominates $ G. $ Thus, the independent domination polynomial of $ G $ is
\[ \phi(n)x+xg(x), \]
where $ g(x) $ is the independent domination polynomial of $ G_{2}. $\\
For the second case, assume that $ n=\prod_{i=1}^{k}p_{i}^{n_{i}} $, where $ p_{i} $'s are primes and $ n_{i} $'s are positive integers with at least one $ n_{i}\geq 2. $ By definition of $ G_{2} $, the set of vertices generated by the ideal $ \langle p_{1}p_{2}\dots p_{k } \rangle\setminus\{0\} $ are never relatively prime with other vertices of $ G_{2} $ and they form an isolated set of the cardinality $ \frac{n}{\prod_{i=1}^{k}p_{i}}-1 $. Thus, there are total $ p_{1}^{n_{1}-1}p_{2}^{n_{2}-1}\dots p_{k}^{n_{k}-1} $ isolated vertices in $ K_{1}\cup G_{2} $ and hence they are the part of every independent domination set other than the vertices of $ K_{\phi(n)}. $ Therefore, the independent domination polynomial of $ G $ is
\[ \phi(n)x+x^{p_{1}^{n_{1}-1}p_{2}^{n_{2}-1}\dots p_{k}^{n_{k}-1}}f(x), \]
where $ f(x) $ is the independent domination polynomial of the connected component of $ G_{2}. $
\end{proof}


Theorem \ref{ind dom of G(zn)} does not exactly calculate the independent domination polynomial of $ \Gamma(\mathbb{Z}_{n})  $ rather it gives its prototype, since $f(x) $ (or  $g(x)$) remains yet to be determined. Thus, the independent domination polynomial of $ \Gamma(\mathbb{Z}_{n})  $ can be completely determined if the structure of $ G_{2} $ is known, which in general seems a difficult task. Following are the immediate consequences of Theorem \ref{ind dom of G(zn)} and present the independent domination polynomial of some special values of $n.$ 

\begin{corollary}\label{first consequence}
Let $ \Gamma(\mathbb{Z}_{n}) $ be the comaximal graph of $ \mathbb{Z}_{n} $. Then the following hold.
\begin{enumerate}
\item[\bf (i)] If $ n=p $ is a prime, then the independent domination polynomial of $ \Gamma(\mathbb{Z}_{n}) $ is $ px $.
\item[\bf (ii)] If $ n=pq, $ where  $  (p<q) $ are primes, then the independent domination polynomial of $ \Gamma(\mathbb{Z}_{n}) $ is 
\[ D_{i}(\Gamma(\mathbb{Z}_{n}),x)=(pq-p-q+1)x+ x^{p}+x^{q}. \]
\item[\bf (iii)] If $ n=pqr, $ where  $  (p<q<r) $ are primes, then the independent domination polynomial of $ \Gamma(\mathbb{Z}_{n}) $ is 
\[ D_{i}(\Gamma(\mathbb{Z}_{n}),x)=x\phi(n)+x^{p+q+r-2}+x^{pq}+x^{pr}+x^{qr}. \]
\end{enumerate}
\end{corollary}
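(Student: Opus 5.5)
The plan is to compute $D_{i}(\Gamma(\mathbb{Z}_{n}),x)$ directly from the structure $\Gamma(\mathbb{Z}_{n})=K_{\phi(n)}\vee(K_{1}\cup G_{2})$ of Lemma \ref{joined union of comaximal graph}. This amounts to listing the independent dominating sets, i.e.\ the maximal independent sets, of the graph.

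\emph{Reduction.} A unit is adjacent to every other vertex, so any independent set containing a unit is that single unit. Hence the $\phi(n)$ units contribute $\phi(n)x$. Every other maximal independent set avoids the units. The vertex $0$ is adjacent only to units, since $0R+aR=aR=R$ iff $a$ is a unit, so every such set contains $0$. It therefore has the form $\{0\}\cup S$, where $S$ is a maximal independent set of $G_{2}$. This is exactly the shape $\phi(n)x+x\,g(x)$ of Theorem \ref{ind dom of G(zn)}(i).

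\emph{Key structural observation.} Vertices within a cell $A_{d}$ are pairwise non-adjacent and have identical neighbourhoods, by properties 1--3 listed before Lemma \ref{joined union of comaximal graph}. So a maximal independent set of $G_{2}$ meeting $A_{d}$ must contain all of $A_{d}$. Consequently maximal independent sets of $G_{2}$ correspond bijectively to maximal independent sets $T$ of the divisor graph $G_{n}$. These are maximal families of proper divisors that are pairwise \emph{not} coprime, and the size of the corresponding set is $\sum_{d\in T}\phi(n/d)$. Each such $T$ contributes $x^{1+\sum_{d\in T}\phi(n/d)}$.

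\emph{Cases.} For (i), $n=p$ has no proper divisors, so $G_{2}$ is empty and $\{0\}$ is dominated by, hence not independent-dominating together with, no further vertices. The only independent dominating sets are then the $p-1$ units and $\{0\}$, giving $(p-1)x+x=px$.

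For (ii), $n=pq$, the proper divisors are $p$ and $q$, which are coprime. The maximal non-coprime families are $\{p\}$ and $\{q\}$, with $|A_{p}|=\phi(q)=q-1$ and $|A_{q}|=p-1$. Together with $\phi(pq)=(p-1)(q-1)=pq-p-q+1$, this yields $(pq-p-q+1)x+x^{q}+x^{p}$.

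For (iii), $n=pqr$, the proper divisors are $p,q,r,pq,pr,qr$. The maximal pairwise non-coprime families are $\{p,pq,pr\}$, $\{q,pq,qr\}$, $\{r,pr,qr\}$ and $\{pq,pr,qr\}$. A family containing a prime $p$ cannot contain $q$, $r$ or $qr$, which gives the first three; a family of only composite divisors can be all three of them, which gives the fourth. For the first, $1+\phi(qr)+\phi(r)+\phi(q)=1+(q-1)(r-1)+(r-1)+(q-1)=qr$, and symmetrically the next two give $pr$ and $pq$. For the last, $1+(r-1)+(q-1)+(p-1)=p+q+r-2$.

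\emph{Main obstacle.} The step requiring care is the exhaustive enumeration of the maximal pairwise non-coprime families in (iii), together with the verification that each listed family is genuinely maximal. I would handle it by splitting on whether $T$ contains a prime divisor. If it does, the prime forces the family. If it does not, $T$ consists of the two-prime divisors, any two of which share a prime.
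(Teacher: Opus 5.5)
Your proposal is correct and follows essentially the paper's route. The paper likewise splits $\Gamma(\mathbb{Z}_n)$ into the units, $\{0\}$ and the cells $A_d$, and exhibits exactly the same sets: $\{0\}\cup A_q$ and $\{0\}\cup A_p$ in (ii), and in (iii) the unions of $\{0\}$ with the cells indexed by $\{pq,pr,qr\}$, $\{p,pq,pr\}$, $\{q,pq,qr\}$ and $\{r,pr,qr\}$. What you add is the twin-cell argument. It shows that every independent dominating set avoiding the units is $\{0\}$ together with whole cells indexed by a maximal pairwise non-coprime family of proper divisors. This proves the list is exhaustive, which the paper's case-by-case listing leaves implicit.
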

\noindent\begin{proof}
	 Let $ G\cong \Gamma(\mathbb{Z}_{n}) $ be the comaximal graph of order $ n. $ Then, we have the following cases.
\begin{itemize} 
\item[\bf (i)]  For $ n=p $, $ G_{2} $ is empty, since its vertex set is partitioned into  $V_{1}=U(\mathbb{Z}_{n})$ and $V_{2}= \{0\}$. Thus, $ \Gamma(\mathbb{Z}_{n})\cong K_{p-1}\vee K_{1}\cong K_{p} $. So, the independent domination polynomial of $G$ is $ px. $
\item[\bf (ii)] For $ n=pq, $ with $ p<q $ being primes. The proper divisors of $n$ are $ p $ and $ q $. We consider the following vertex partition of $G$.
\begin{align*}
V_{1}&=\{ v\in V(G): d_{v}=pq-1\}, \qquad V_{2}=\{v\in V(G): d_{v}=pq-p-q+1\},\\
V_{3}&=\{ zq : z=1,2,\dots,p-1\}, \qquad V_{4} = \{ zp : z=1,2,\dots, q-1 \}.
\end{align*}
Thus, the vertex set of $ G $ is portioned into $ V_{1}\cup V_{2}\cup V_{3}\cup V_{4} $ and we have the following cases:\\
Case (1). Each vertex of $ V_{1} $ is independent and dominates all the vertices of $ G $. So, $ d_{i}(G, 1)=\phi(n) $.\\
Case (2). Consider $ B=V_{2}\cup V_{3} $, where $ V_{2} $ is the independent set and dominates vertices of $ V_{1} $. Similarly, $ V_{3} $ is the independent set, dominating both $ V_{1} $ and $ V_{4} $. Thus, $ B $ is the independent dominating set of cardinality $ p $.\\
Case (3). As in case (2), it is clear that $ V_{2}\cup V_{4}  $ is the another independent dominating set of cardinality $ q. $ Therefore, the independent domination polynomial of $ G $ is
\[ D_{i}(G,x)=(pq-p-q+1)x+x^{p}+x^{q}. \]
\item[\bf (iii)] For $ n=pqr, $ with $ p<q<r $ being primes. The proper divisors of $ n $ are $ p,q,r, pq, pr $ and $ qr $. Based on these divisors, we have the following vertex partition of $ G $.
\begin{align*}
	V_{1}&=\{ v\in V(G): d_{v}=n-1\}, \qquad V_{2}=\{v\in V(G): d_{v}=\phi(n)\},\\
	V_{3}&=\{ zqr : z=1,2,\dots,p-1\}, \qquad V_{4} = \{ zpr : z=1,2,\dots, q-1 \},\\
	V_{5}&=\{ zpq : z=1,2,\dots,r-1\}, \qquad V_{6} = \{ zr : z=1,2,\dots, pq-1, p \nmid z, q\nmid z \},\\
	V_{7}&=\{ zq : z=1,2,\dots, pr-1, p \nmid z, r\nmid z \}, \quad V_{8}=\{ zp : z=1,2,\dots, qr-1, q \nmid z, r\nmid z \}.
\end{align*}
Depending on this vertex partition, we consider the following cases:\\
Case (1). Each single vertex of $ V_{1} $ is independent and dominates all other vertices of $ G, $ so $ d_{i}(G,1) =\phi(n)$\\
Case (2). Consider $ S_{1}=V_{2}\cup V_{3}\cup V_{4}\cup V_{5} $. Clearly all of them are independent. $ V_{2} $ dominates vertices of $ V_{1} $, $ V_{3} $ dominates vertices of $ V_{1} $ and $ V_{8} $, $ V_{4} $ dominates vertices of $ V_{1} $ and $ V_{7} $ and $ V_{5} $ dominates vertices of $ V_{1} $ and $ V_{6} $. Thus, $ S_{1} $ is the independent dominating set of cardinality 
$$ |S_{1}|=1+p-1+q-1+r-1=p+q+r-2. $$
Case (3). Let $ S_{2}=V_{2}\cup V_{3}\cup V_{4} \cup V_{6}. $ Then all $ V_{i} $'s are independent and always dominate $ V_{1} $. Also, $ V_{3} $ dominates $ V_{8}, $  $ V_{4} $ dominates $ V_{7} $ and $ V_{6} $ dominates $ V_{5}, V_{7} $ and $ V_{8}. $ Thus $ S_{2} $ is the independent dominating set of cardinality $$ |S_{2}|=1+p-1+q-1+(p-1)(q-1)=pq. $$\\
Case (4). Similar as case (3), it is easy to verify that $ S_{3}=V_{2}\cup V_{3}\cup V_{5}\cup V_{7}, $ and $ S_{4}=V_{2}\cup V_{4}\cup V_{5}\cup V_{8} $ are the other two independent dominating sets of $ G$ with cardinalities $ pr$ and $ qr$, respectively. Therefore, with the above calculation, the independent domination polynomial of $ G $ is
\[ D_{i}(G,x)=x\phi(n)+x^{p+q+r-2}+x^{pq}+x^{pr}+x^{qr}. \]
\end{itemize}
\end{proof}


We explain Corollary \ref{first consequence} with the help of the following example.\\
\textbf{Example 1.} Let $ \Gamma(\mathbb{Z}_{n}) $ be the comaximal graph of order $ n\geq 2 $. Then, we have the following cases.\\
(1). For $ n=5, $ $ \Gamma(\mathbb{Z}_{n})\cong K_{5} $ and its independent domination polynomial is $D_{i}(\Gamma(\mathbb{Z}_{n}),x)=5x. $\\
\begin{figure}[H]
	\centerline{\scalebox{.3}{\includegraphics{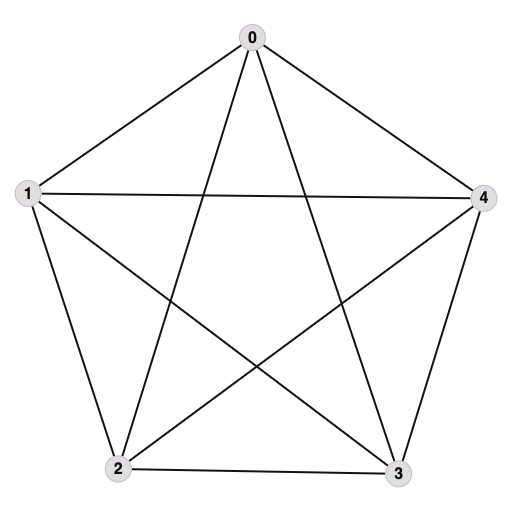}}\scalebox{.3}{\includegraphics{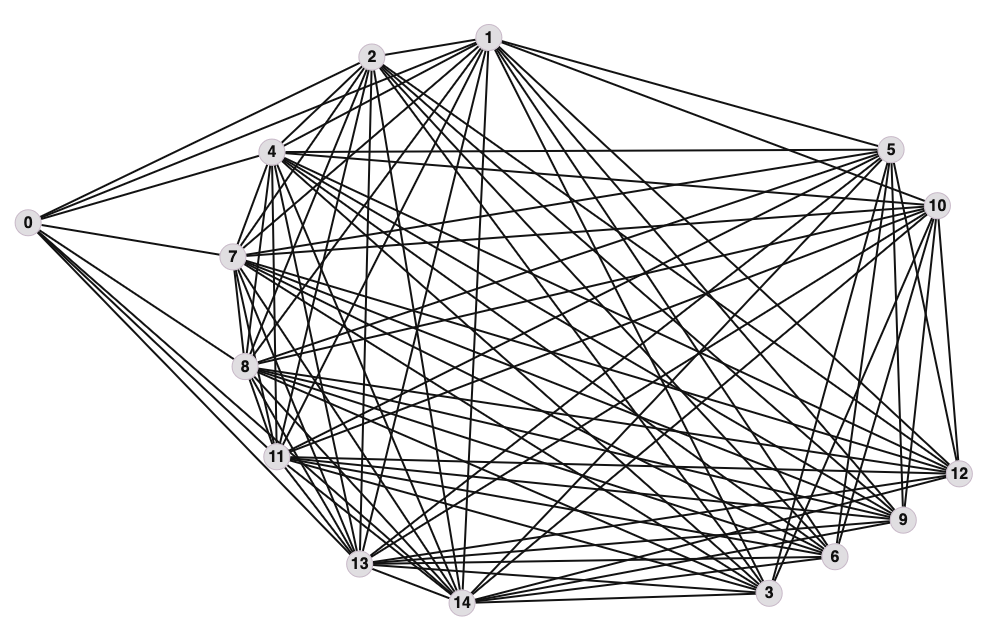}}}
	\caption{Comaximal graphs of $ \mathbb{Z}_{5} $ and $ \mathbb{Z}_{15} $}\label{comaximal graph Fig 1}
\end{figure}
\noindent(2). For $ n=15=3\cdot 5 $, it is easy to see that $ V_{1}=\{1,2,4,7,8,11,13,14\}, ~ V_{2}=\{0\},~ V_{3}=\{5,10\} $ and $ V_{4}=\{3,6,9,12\}. $ The graph structure of $ \Gamma(\mathbb{Z}_{15}) $ is given in Figure \ref{comaximal graph Fig 1}. Now, it is easy to see that the independent domination polynomial of $ \Gamma(\mathbb{Z}_{15}) $ is
\[ D_{i}(\Gamma(\mathbb{Z}_{15}), x)=8x+x^{3}+x^{5}. \]
\begin{figure}[H]
	\centerline{\scalebox{.23}{\includegraphics{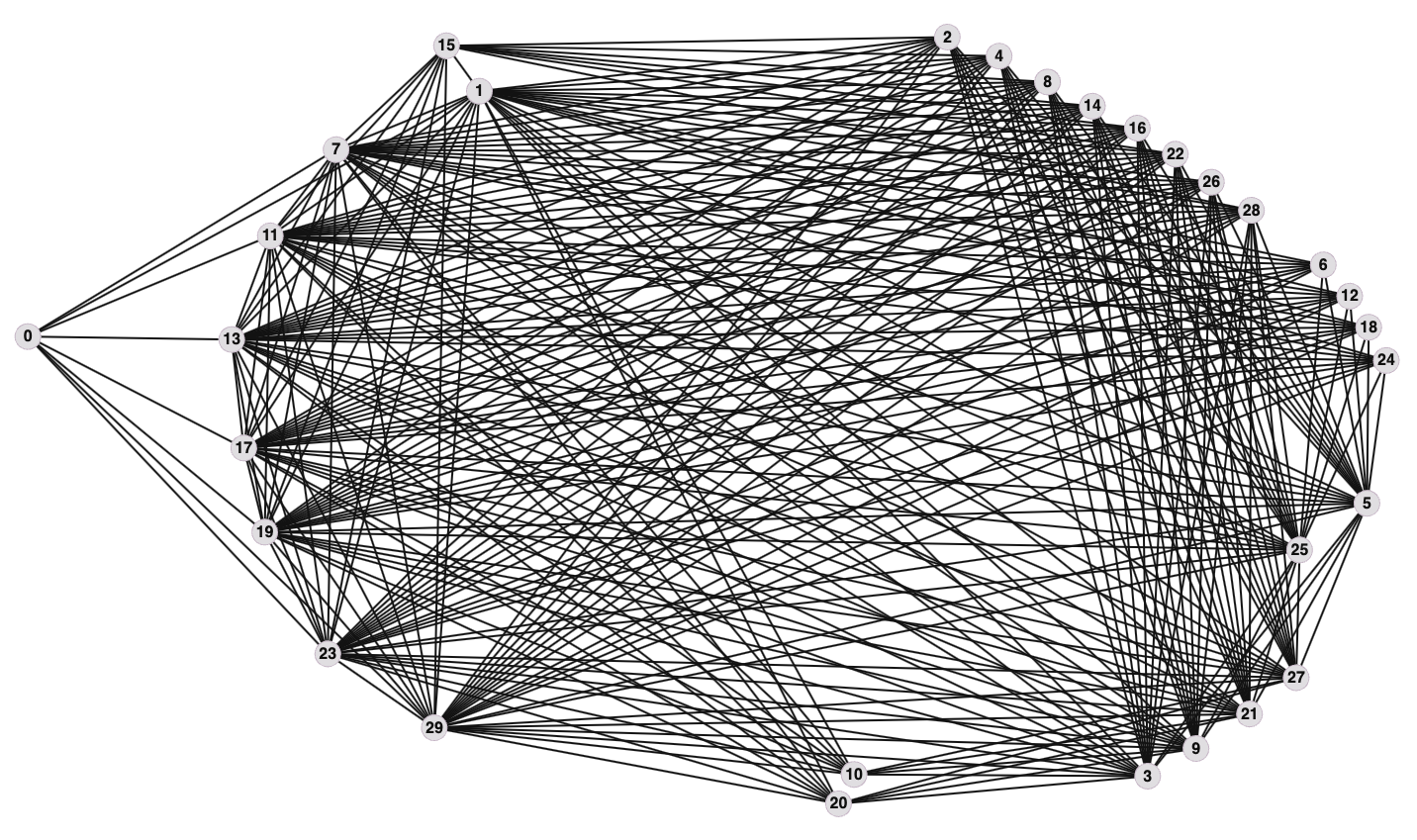}}}
	\caption{Comaximal graphs of $ \mathbb{Z}_{30} $}
	\label{comaximal graph Fig 2}
\end{figure}
\noindent(3). For  $ n=30=2\cdot3\cdot5 $, the comaximal graph $ \Gamma(\mathbb{Z}_{30}) $ is shown in Figure \ref{comaximal graph Fig 2}. By vertex partition of $ \Gamma(\mathbb{Z}_{30}) $, we get: 
 \begin{align*}
	 V_{1}&=\{1,7,11,13,17,19,23,29\},~ V_{2}=\{0\},~V_{3}=\{15\},~V_{4}=\{10,20\},~V_{5}=\{6,12,18,24\},\\
	 V_{6}&=\{5,25\},~V_{7}=\{3,9,21,27\},~ 
 V_{8}=\{2,4,8,14,16,22,26,28\}. 
\end{align*} According to the construction of the independent domination sets given in (iii) of Corollary \ref{first consequence}, the independent domination polynomial of $ \Gamma(\mathbb{Z}_{30}) $ is
\[ D_{i}(\Gamma(\mathbb{Z}_{30}),x)=8x+x^{6}+x^{8}+x^{10}+x^{15}. \]

We note that $ x=0 $ is always zero of $ D_{i}(\Gamma(\mathbb{Z}_{n}),x), $ and for $ n=pq,  $ we have the following result.
\begin{proposition}
	Let $ D_{i}(\Gamma(R),x) $ be the independent domination polynomial of ring $ R $.  If $ R\cong \mathbb{Z}_{pq}, $ $ (2<p<q ) $ are primes, then $ D_{i}(\Gamma(R),x)=0 $ has only one real root $ x=0. $
\end{proposition}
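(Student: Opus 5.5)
By Corollary~\ref{first consequence}(ii), for $R\cong\mathbb{Z}_{pq}$ with $2<p<q$ primes we have
\[
D_{i}(\Gamma(R),x)=(p-1)(q-1)x+x^{p}+x^{q}=x\left(x^{q-1}+x^{p-1}+(p-1)(q-1)\right).
\]
The factor $x$ gives the root $x=0$. It therefore suffices to show that
\[
h(x)=x^{q-1}+x^{p-1}+(p-1)(q-1)
\]
has no real zero.

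The key observation is a parity check. Since $2<p<q$, both $p$ and $q$ are odd primes, so $p-1$ and $q-1$ are even positive integers. Hence $x^{q-1}\ge 0$ and $x^{p-1}\ge 0$ for every real $x$. The constant term $(p-1)(q-1)$ is at least $2\cdot 4=8>0$. Therefore $h(x)\ge (p-1)(q-1)>0$ for all $x\in\mathbb{R}$, so $h$ has no real root.

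It follows that $D_i(\Gamma(R),x)$ has exactly one real root, namely $x=0$, and it is a simple root because $h(0)\neq 0$.

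There is essentially no obstacle here. The only point needing care is the hypothesis $p>2$, which is used to make the exponent $p-1$ even. If $p=2$, then $h(x)=x^{q-1}+x+(q-1)$ contains the odd-degree term $x$, and positivity can fail. For example, with $q=3$ we get $x^{2}+x+2$, which happens to have no real root, but $q-1$ even together with a linear term requires separate analysis. This is why the hypothesis excludes $p=2$, and I will note explicitly where it enters.
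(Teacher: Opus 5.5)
Your proof is correct and follows the same route as the paper: factor $D_i(\Gamma(\mathbb{Z}_{pq}),x)=x\,h(x)$ and use the parity of the exponents. You also state the key point more accurately than the paper, which only says that $q-p$ is even; what actually matters is that $p-1$ and $q-1$ are both even, so $h(x)\ge (p-1)(q-1)>0$.

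Your closing aside about $p=2$ is wrong, though it does not affect the proof. For $p=2$ one still has $x^{q-1}+x+(q-1)>0$ for every real $x$. For $x\ge -1$ this holds because $x+(q-1)\ge q-2>0$, and for $x<-1$ because $x^{q-1}>|x|$. So positivity does not fail there, and the conclusion holds for $p=2$ as well.
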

\noindent \begin{proof}
	For $ R\cong Z_{pq}, $ where $ p>2~ (p<q) $ are primes. Then, the independent domination polynomial of $ D_{i}(\Gamma(\mathbb{Z}_{pq}),x) $ is
\[ D_{i}(\Gamma(\mathbb{Z}_{pq},x)=x h(x), \]
where $h(x)=pq-p-q+1+x^{p-1}+x^{q-1}. $ Now, since $ 2<p<q, $ so $ q-p  $ is even and hence for this reason, $ h(x) $ has no real root.
\end{proof}

Next, we have more general consequences of Theorem \ref{ind dom of G(zn)}
\begin{corollary}\label{second consequence}
Let $ \Gamma(\mathbb{Z}_{n}) $ be the comaximal graph of $ \mathbb{Z}_{n} $. Then the following hold.
\begin{itemize}
\item[\bf (i)] If $ n=p^{m}, $  $  p $ is a prime and $ m\geq 2 $ is a positive integer, then the independent domination polynomial of $ \Gamma(\mathbb{Z}_{n}) $ is 
\[ D_{i}(\Gamma(\mathbb{Z}_{n}),x)=(p^{m}-p^{m-1})x+x^{p^{m-1}}. \]
\item[\bf (ii)] If $ n=p^{n_{1}}q^{n_{2}} $, where $ p<q $ are primes and $ n_{1}, n_{2}  $ are positive integers, then the independent domination polynomial $ \Gamma(\mathbb{Z}_{n}) $ is
\[ D_{i}(\Gamma(\mathbb{Z}_{n}),x)=\phi(n)x+x^{p^{n_{1}}q^{n_{2}-1}}+x^{p^{n_{1}-1}q^{n_{2}}}. \]
\item[\bf (iii)] If $ n=p^{n_{1}}q^{n_{2}}r^{n_{3}}, $ where $ p,q,r $ are primes and $ n_{1}, n_{2}, n_{3} $ are positive integers, then the independent domination polynomial $ \Gamma(\mathbb{Z}_{n}) $ is
\[ D_{i}(\Gamma(\mathbb{Z}_{n}),x)=\phi(n)x+x^{p^{n_{1}-1}q^{n_{2}-1} r^{n_{3}-1}}\Big(x^{p+q+r-2}+x^{pq}+x^{pr}+x^{qr}\Big). \]
\end{itemize}
\end{corollary}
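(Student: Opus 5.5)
The plan is to use that, in any graph, the independent dominating sets are exactly the maximal independent sets. So $D_{i}(\Gamma(\mathbb{Z}_{n}),x)$ is obtained by listing the maximal independent sets of $\Gamma(\mathbb{Z}_{n})$ together with their sizes.

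By Lemma \ref{joined union of comaximal graph}, $\Gamma(\mathbb{Z}_{n})=K_{\phi(n)}\vee(K_{1}\cup G_{2})$. Hence a maximal independent set either is a single unit, which gives the term $\phi(n)x$, or consists only of non-units. For non-units $a,b$ we have $aR+bR=R$ if and only if no prime $p_{i}\mid n$ divides both. For a non-unit $a$ let its \emph{support} be $S(a)=\{p_{i}: p_{i}\mid a\}\neq\emptyset$; for $a=0$ the support is the full set of primes. Two non-units are adjacent exactly when their supports are disjoint.

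Therefore a set of non-units is independent if and only if its supports form a pairwise intersecting family. By property 2 of the cells $A_{d_{i}}$, a maximal independent set is the union of all non-units whose support lies in a \emph{maximal} intersecting family $\mathcal F$ of nonempty subsets of the prime set $P$. Note that $P\in\mathcal F$ always, which recovers the isolated multiples of $\prod p_{i}$ used in Theorem \ref{ind dom of G(zn)}. The size of such a set is the number of $x\in\mathbb{Z}_{n}$ with $S(x)\in\mathcal F$, which I will count by inclusion--exclusion using $|\{x: d\mid x\}|=n/d$.

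\textbf{(i)} $P=\{p\}$. The only family is $\{\{p\}\}$, so the unique non-unit maximal independent set is the set of multiples of $p$. It has size $p^{m-1}$, and this gives the stated polynomial.

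\textbf{(ii)} $P=\{p,q\}$. The maximal intersecting families are the two stars $\{\{p\},\{p,q\}\}$ and $\{\{q\},\{p,q\}\}$. They give the multiples of $p$ (size $n/p=p^{n_{1}-1}q^{n_{2}}$) and the multiples of $q$ (size $p^{n_{1}}q^{n_{2}-1}$), as claimed.

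\textbf{(iii)} $P=\{p,q,r\}$. The key combinatorial step is to check that there are exactly four maximal intersecting families of nonempty subsets of a $3$-set:
\begin{itemize}
\item[$\bullet$] the three stars, i.e.\ all subsets containing $p$, all containing $q$, and all containing $r$;
\item[$\bullet$] the family of all subsets of size at least $2$.
\end{itemize}
To see this, note that if a family contains no singleton it is contained in the size-$\ge2$ family, which is itself intersecting. If it contains a singleton $\{p\}$, every member must contain $p$, so it lies in the star at $p$.

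The star at $p$ gives the multiples of $p$, of size $n/p=m\,qr$ with $m=p^{n_{1}-1}q^{n_{2}-1}r^{n_{3}-1}$. The stars at $q$ and $r$ give sizes $m\,pr$ and $m\,pq$. By inclusion--exclusion, the size-$\ge2$ family gives
\[\frac{n}{pq}+\frac{n}{pr}+\frac{n}{qr}-2\frac{n}{pqr}=m(p+q+r-2).\]
For $m=1$ this agrees with (iii) of Corollary \ref{first consequence}, including its sets $S_{1},\dots,S_{4}$.

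The main obstacle is how to interpret the displayed formula in (iii). The argument above shows that all four nontrivial exponents are \emph{multiplied} by $m$: the correct exponents are $m(p+q+r-2)$, $m\,pq$, $m\,pr$, $m\,qr$. The multiplicative form $x^{m}(\cdots)$ adds $m$ to each exponent instead. The two agree only when $m=1$, i.e.\ when $n$ is squarefree, and in general they differ (for $n=2^{2}\cdot 3\cdot 5$ the size-$\ge2$ set has $16$ elements, not $8$).

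So I would prove the statement in the form
\[D_{i}(\Gamma(\mathbb{Z}_{n}),x)=\phi(n)x+x^{m(p+q+r-2)}+x^{m\,pq}+x^{m\,pr}+x^{m\,qr}.\]
For squarefree $n$ this coincides with (iii) as displayed. Otherwise the factor $x^{m}(\cdots)$ has to be read as the substitution $x\mapsto x^{m}$ in the bracket, and I would point this out explicitly. The same scaling explains the exponent in (ii): $p^{n_{1}}q^{n_{2}-1}=m\cdot p$ with $m=p^{n_{1}-1}q^{n_{2}-1}$.
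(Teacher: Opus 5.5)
Your proof is correct and gives the right polynomial, but it takes a different route from the paper.

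\textbf{How the routes differ.} The paper groups the non-units into the cells $V_{2},\dots,V_{8}$ according to which of $p,q,r$ divide them. It computes each cell size by summing values of $\phi$, exhibits the four unions (the three stars and $V_{2}\cup V_{3}\cup V_{4}\cup V_{5}$), and checks that they dominate. It never argues that there are no other independent dominating sets.

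Your reduction supplies exactly that missing completeness. Independent dominating sets are the maximal independent sets; the non-unit ones correspond to maximal intersecting families of supports; and a $3$-set has exactly four such families. Your inclusion--exclusion with $|\{x: d\mid x\}|=n/d$ is also shorter than the $\phi$-sums.

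Your approach also explains the shape of (ii) and (iii). Reduction $\mathbb{Z}_{n}\to\mathbb{Z}_{pqr}$ is $m$-to-one and preserves supports. So every non-unit exponent of the squarefree polynomial is simply multiplied by $m$, and this holds for any number of primes.

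\textbf{The display in (iii).} You are right that it cannot be read literally. The cell sizes in the paper's own proof ($|V_{2}|=m$, $|V_{3}|=m(p-1)$, $|V_{4}|=m(q-1)$, $|V_{5}|=m(r-1)$, and so on) already give exponents $m(p+q+r-2)$, $mpq$, $mpr$, $mqr$. The error enters only in the last line, where $x^{m(p+q+r-2)}$ is rewritten as $x^{m}x^{p+q+r-2}$.

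\textbf{A slip in your diagnosis.} Your claim that the literal display is correct for squarefree $n$ is wrong. With $m=1$ the factor is $x^{1}=x$, so the display shifts every exponent up by one. For $n=30$ it gives $8x+x^{7}+x^{9}+x^{11}+x^{16}$ instead of $8x+x^{6}+x^{8}+x^{10}+x^{15}$, contradicting Corollary \ref{first consequence}(iii) and Example 1.

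In general $m+k=mk$ forces $m=k=2$, so the literal display is wrong for every $n$ in (iii). It is correct for all $n$ only under your substitution reading, $\phi(n)x+h(x^{m})$ with $h(y)=y^{p+q+r-2}+y^{pq}+y^{pr}+y^{qr}$. Similarly, for $n=60$ the literal display gives the exponent $2+8=10$, not $8$, for the set of size $16$. State the correction as ``the display must be read as $x\mapsto x^{m}$ in the bracket'' for all $n$, not only the non-squarefree ones.
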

\noindent\begin{proof}
	 Let $ G\cong \Gamma(\mathbb{Z}_{n}) $ be comaximal graph of the ring $ \mathbb{Z}_{n}. $ Now, we have the following cases to consider.\\
	   (i) For $ n=p^{m}, m\geq 2, $ the proper divisors of $ n $ are $ p^{i},  $ where $ i=1,\dots,m-1 $ and we note that $ p^{i} $ is not relatively prime to $ p^{j} $, for $ 1\leq i, j\leq m-1. $ Thus $ G_{2} $ is a disconnected graph with 
	   $$ |A_{p^{i}}|=\phi\left(\frac{p^{m}}{p^{i}}\right)=\phi(p^{m-i}),  \quad \text{for} \quad i=1,2,\dots, m-1. $$
	    So, the structure of $ G $ is
	\begin{align*}
		G&\cong K_{\phi(p^{m})}\vee \Big(K_{1}\cup \big(\overline{K}_{\phi(p^{m-1})}\cup \overline{K}_{\phi(p^{m-2})}\cup \dots \cup \overline{K}_{\phi(p^{2})}\cup \overline{K}_{\phi(p)}\big)\Big).
	\end{align*}
As each vertex of $ K_{\phi(n)} $ is of degree $ n-1 $, so its each singleton is the independent dominating set of $ G. $ Again, the vertices of 
$$ K_{1}\cup \big(\overline{K}_{\phi(p^{m-1})}\cup \overline{K}_{\phi(p^{m-2})}\cup \dots \cup \overline{K}_{\phi(p^{2})}\cup \overline{K}_{\phi(p)}\big) $$ form the independent dominating set of cardinality $ p^{m-1} $, since, $ \sum_{i=1}^{m-1}\phi(p^{i})=p^{m-1}-1. $ Therefore, the independent domination polynomial of $ G $ is
\[ D_{i}(G,x)=p^{m-1}(p-1)x+x^{p^{m-1}}. \]
(ii). For $ n=p^{n_{1}}q^{n_{2}} $, $ (p<q) $ are primes and $ n_{1},n_{2}\geq 2 $ are positive integers, the comaximal graph of $ \mathbb{Z}_{n} $ can be constructed as:
The proper divisors of $ n $ are $ p^{i}q^{j}, $ where $ 0\leq i\leq n_{1}, ~ 0\leq j\leq n_{2} $ and $ i+j\notin \{0, n_{1}+n_{2}\}. $ Based on these divisors, the vertex set of $ G_{2} $ can be partitioned as (see, \cite{banerjeeMatrices, bilalijpam}):
\begin{align*}
	V(G_{2})=&\big(A_{p}\cup A_{p^{2}}\cup \dots \cup A_{p^{n_{1}}}\big)\cup \big(A_{q}\cup A_{q^{2}}\cup \dots \cup A_{q^{n_{2}}}\big) \cup \Big( \cup_{j=1}^{n_{2}}A_{pq^{j}} \Big)\\
	&\quad\cup \Big( \cup_{j=1}^{n_{2}}A_{p^{2}q^{j}} \Big)\cup \dots \cup \Big( \cup_{j=1}^{n_{2}-1}A_{p^{n_{1}}q^{j}} \Big).
\end{align*}
By definition of $A_{d_{i}}$'s, we note that each vertex of $ A_{p^{i}} $ is adjacent to $ A_{q^{j}} $ and no vertex of $ A_{p^{i}q^{j}} $ to any other vertex of $ G_{2} $. So, $ A_{p^{i}} $'s form an independent set (say $ V_{2} $) of cardinality 
$$ \sum_{i=1}^{n_{1}}|A_{p_{i}}|=\sum_{i=1}^{n_{1}}\phi\left(p^{n_{1}-i}q^{n_{2}}\right)=p^{n_{1}-1}q^{n_{2}-1}(p-1).  $$
 Similarly, $ A_{q^{j}} $'s form another independent set $ V_{3} $ of cardinality $ p^{n_{1}-1}q^{n_{2}-1}(q-1) $ and $ A_{p^{i}q^{j}} $'s along with $\{0\}$ form an isolated set $ V_{1} $ of cardinality $ p^{n_{1}-1}q^{n_{2}-1} $ in $ G_{2}. $ Thus, it follows that 
 $$ G_{2}=\overline{K}_{p^{n_{1}-1}q^{n_{2}-1}-1}\cup K_{p^{n_{1}-1}q^{n_{2}-1}(p-1), p^{n_{1}-1}q^{n_{2}-1}(q-1)}. $$
  We recall that there are $ \phi(n) $ vertices in $ G $ of degree $ n-1 $ and each such vertex is an independent domination set of $ G. $ Further, each vertex of $ V_{1} $ is of degree $ \phi(n) $. Now, $ V_{1}\cup V_{2} $ is an independent set and the vertices of $ V_{1} $ dominates the vertices of degree $ n-1 $, while the vertices of $ V_{2} $ dominates the vertices of $ V_{3} $ and the vertices of degree $ n-1. $ It implies that $ V_{1}\cup V_{2} $ is the independent domination set of cardinality 
  $$ |V_{1}\cup V_{2}|=p^{n_{1}-1}q^{n_{2}-1}+p^{n_{1}-1}q^{n_{2}-1}(q-1)=p^{n_{1}-1}q^{n_{2}}. $$ Likewise, $ V_{1}\cup V_{3} $ is the another independent domination set of cardinality $ p^{n_{1}}q^{n_{2}-1}  $. Hence, the independent domination polynomial of $ G $ is
\[ D_{i}(G,x)=\phi(n)x+x^{p^{n_{1}}q^{n_{2}-1}}+x^{p^{n_{1}-1}q^{n_{2}}}. \]
(iii). Let $ n=p^{n_{1}}q^{n_{2}}r^{n_{3}}, $ where $ p,q,r $ are primes and $ n_{1}, n_{2}, n_{3} $ are positive integers. Then the proper divisors of $ n $ are 
$$ p^{i}, q^{j}, r^{k}, p^{i}q^{j}, p^{i}r^{k}, q^{j}r^{k}, p^{i}q^{j}r^{k},\quad  \text{where}\quad  i+j+k\notin \{0,n_{1}+n_{2}+n_{3}\}. $$
 Let $ V_{1} $ be set of vertices of $ G $ of degree $ n-1 $. Clearly, $ p^{i}q^{j}r^{k} $'s are not relatively prime to (as vertices in $G_{n}$ are not connected) $ p^{i}, q^{j}, r^{k}, p^{i}q^{j}, p^{i}r^{k}, q^{j}r^{k}, $ and they constitute an isolated set of cardinality $ |V_{2}|=p^{n_{1}-1}q^{n_{2}-1} r^{n_{3}-1} $ in $ G $. Also,  $ p^{i} $'s are adjacent to each of $q^{j},r^{k}, q^{j}r^{k},  $ in $ G_{n} $, where $ i=1,2,\dots,n_{1}, j=1,2,\dots, n_{2} $, $ k=1,2,\dots, n_{3}, $ and their cardinality of corresponding component in $ G_{2} $ is 
\begin{align*}
 |V_{8}|=\sum_{i=1}^{n_{1}}|A_{p^{i}}|&=\phi(p^{n_{1}-1}q^{n_{2}}r^{n_{3}})+\phi(p^{n_{1}-2}q^{n_{2}} r^{n_{3}})+\dots+\phi(pq^{n_{2}} r^{n_{3}})+\phi(q^{n_{2}} r^{n_{3}})\\
 &=\phi(q^{n_{2}})\phi( r^{n_{3}})(\phi(p^{n_{1}-1})+\phi(p^{n_{1}-2})+\dots+\phi(p)+1)\\
 &=(q^{n_{2}}-q^{n_{2}-1})( r^{n_{3}}- r^{n_{3}-1})p^{n_{1}-1}\\
 &=p^{n_{1}-1}q^{n_{2}-1}r^{n_{3}-1}(q-1)(r-1).
\end{align*}
Proceeding similar as above, we have:\\
(1).  $ q^{j} $'s are adjacent to each of $p^{i}, r^{k}, p^{i}r^{k} $ in $ G_{n} $, and the cardinality of the corresponding component in $ G_{2} $ is $$ |V_{7}|=p^{n_{1}-1}q^{n_{2}-1} r^{n_{3}-1}(p-1)(r-1). $$
(2). $ r^{k} $'s are adjacent to $ p^{i}, q^{j}, p^{i}q^{k} $ and the cardinality of their corresponding component in $ G_{2} $ is $$  |V_{6}|=p^{n_{1}-1}q^{n_{2}-1} r^{n_{3}-1}(p-1)(q-1).$$
(3). $ p^{i}q^{j} $ is adjacent to $ r^{k} $ in $ G_{n} $ and its corresponding component in $ G_{2} $ is of cardinality $$ |V_{5}|=\sum_{i=1}^{n_{1}}\sum_{j=1}^{n_{2}} |A_{p^{i}q^{j}}|=\sum_{i=1}^{n_{1}}\sum_{j=1}^{n_{2}}\phi\left(p^{n_{1}-i}q^{n_{2}-j} r^{n_{3}}\right)= p^{n_{1}-1}q^{n_{2}-1} r^{n_{3}-1}(r-1). $$
(4). $ p^{i}r^{k} $ is adjacent to $ q^{j} $ in $ G_{n} $ and the cardinality of their component in $ G_{2} $ is 
$$ |V_{4}|=p^{n_{1}-1}q^{n_{2}-1} r^{n_{3}-1}(q-1). $$
(5). $ q^{j}r^{k} $ is adjacent to $ p^{i} $ in $ G_{n} $ and the cardinality of its component in $ G_{2} $ is 
$$ |V_{3}|=p^{n_{1}-1}q^{n_{2}-1} r^{n_{3}-1}(p-1). $$
 Now, it is clear that each singleton of $ V_{1} $ is independent and dominates all vertices of $ G. $ Also, $ V_{2}\cup V_{3} \cup V_{4} \cup V_{5} $ is the independent domination set, since they always dominate vertices of $ V_{1} $, besides that $ V_{3} $ dominates $ V_{8} $, $ V_{4} $ dominates $ V_{7} $, $ V_{5} $ dominates $ V_{6} $. Its cardinality is $$ p^{n_{1}}q^{n_{2}}r^{n_{3}}\big(1+p-1+q-1+r-1\big)=p^{n_{1}}q^{n_{2}}r^{n_{3}}\big(p+q+r-2\big).$$
 
 Similarly, $ V_{2}\cup V_{3} \cup V_{4} \cup V_{6} $, $ V_{2}\cup V_{3} \cup V_{5} \cup V_{7} $ and $ V_{2}\cup V_{4} \cup V_{5} \cup V_{8} $ are the other independent dominating set of $ G. $ Therefore, the independent domination polynomial of $ G $ is
	\begin{align*}
D_{i}(G,x)&=|V_{1}|x+\Big( x^{|V_{2}|+|V_{3}|+|V_{4}|+|V_{5}|}+x^{|V_{2}|+|V_{3}|+|V_{4}|+|V_{6}|}+x^{|V_{2}|+|V_{3}|+|V_{5}|+|V_{7}|}\\
&\qquad\quad~ +x^{|V_{2}|+|V_{4}|+|V_{5}|+|V_{8}|} \Big)\\
&=\phi(n)x+x^{p^{n_{1}-1}q^{n_{2}-1} r^{n_{3}-1}}\Big(x^{p+q+r-2}+x^{pq}+x^{pr}+x^{qr}\Big).
\end{align*}
\end{proof}

It is easy to verify that for $ m=1, 2$, the independent domination polynomial $ D_{i}(\Gamma(\mathbb{Z}_{p^{m}}),x) $ of $ \Gamma(\mathbb{Z}_{p^{m}}) $  has only real zeros.  Though for $ m\geq 3, p\geq 2 $, the independent domination polynomial is $ D_{i}(\Gamma(\mathbb{Z}_{p^{m}}),x)=xh(x),$ where $ h(x)=p^{m}-p^{m-1}+x^{p^{m-2}}.  $ We state the following result about the zeros of $ h(x). $
\begin{proposition}
The polynomial $ h(x) $ has one real zero.
\end{proposition}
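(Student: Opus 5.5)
The plan is to treat $h(x)=p^{m}-p^{m-1}+x^{N}$, with $N=p^{m-2}$ as in the text above, as a binomial $x^{N}+c$ whose constant term $c=p^{m-1}(p-1)$ is positive. I would first make explicit that $c>0$ for every prime $p$ and $m\geq 3$. Everything then depends only on the parity of the exponent $N$. No structural information about $\Gamma(\mathbb{Z}_{p^{m}})$ beyond Corollary \ref{second consequence}(i) is needed, since that result already gives $D_{i}(\Gamma(\mathbb{Z}_{p^{m}}),x)=xh(x)$.

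The main case is $p$ odd, where $N=p^{m-2}$ is odd. Then $h'(x)=Nx^{N-1}\geq 0$, with equality only at $x=0$, so $h$ is strictly increasing on $\mathbb{R}$. Moreover $h(x)\to-\infty$ as $x\to-\infty$ and $h(x)\to+\infty$ as $x\to+\infty$. By the intermediate value theorem $h$ has a zero, and strict monotonicity shows it is unique. I would also write it down explicitly as the real $N$-th root
\[ x_{0}=-\big(p^{m-1}(p-1)\big)^{1/p^{m-2}}, \]
which is negative. For $x\geq 0$ we have $h(x)\geq c>0$, which independently confirms that there is no nonnegative root. The remaining $N-1$ roots are $x_{0}\omega$ with $\omega\neq 1$ an $N$-th root of unity, and these are non-real.

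The step I expect to be the real obstacle is the parity hypothesis, not the analysis. If $p=2$ and $m\geq 3$, then $N=2^{m-2}$ is even. In that case $h(x)=x^{N}+c\geq c>0$ on all of $\mathbb{R}$, so $h$ has no real zero. I would therefore state the proposition under the assumption that $p$ is odd, or equivalently that the exponent is odd, and record the case $p=2$ as having no real zero.

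I would also check the exponent against Corollary \ref{second consequence}(i), which actually gives $h(x)=p^{m}-p^{m-1}+x^{p^{m-1}-1}$. For odd $p$ that exponent is even, so $h$ would have no real root, and the single real zero of $xh(x)$ would be $x=0$. So the proof will fix which exponent is intended and then apply the same parity dichotomy: an odd exponent gives exactly one real zero, namely the negative real root above, and an even exponent gives none.
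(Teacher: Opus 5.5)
Your proof is essentially the paper's: the paper just notes that $p^{m-2}$ is odd for $p\geq 3$, so $x^{p^{m-2}}+p^{m}-p^{m-1}$ has exactly one real zero, and your monotonicity and intermediate value argument makes that step explicit. Your caveats are also correct: the paper's argument only covers $p\geq 3$ (for $p=2$ this $h$ has no real zero), and the exponent actually forced by Corollary \ref{second consequence}(i) is $p^{m-1}-1$, which is even for odd $p$, so with that exponent the claim holds only for $p=2$.
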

\noindent \begin{proof}
	Since, $ h(x)=x^{p^{m-2}}+p^{m}-p^{m-1} $ and $p^{m-2} $ is odd for prime $ p\geq 3 $ and $ m\geq 3. $ Thus $ h(x) $ has only one real zero.
\end{proof}

\begin{theorem}
	For $ m\geq 3, p\geq 2 $, the independent domination polynomial of $\Gamma(\mathbb{Z}_{n})$ with $n=p^{m}$ has only one real zero.
\end{theorem}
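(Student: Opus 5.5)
The plan is to start from the explicit closed form in Corollary \ref{second consequence}(i). For $n=p^{m}$ with $m\geq 3$ it gives
\[ D_{i}(\Gamma(\mathbb{Z}_{p^{m}}),x)=(p^{m}-p^{m-1})x+x^{p^{m-1}}=x\,h(x),\qquad h(x)=x^{p^{m-1}-1}+c, \]
where $c=p^{m-1}(p-1)>0$. The real zeros of $D_{i}$ are therefore $x=0$, which, as noted earlier in the paper, is always a zero, together with the real zeros of $h$. Since $h(0)=c\neq 0$, the zero $x=0$ is simple and is never shared with $h$. So the whole argument reduces to analysing the binomial $h(x)=x^{N}+c$ with $N=p^{m-1}-1\geq 1$ and $c>0$.

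\textbf{Step 1.} I will settle the real zeros of a binomial $x^{N}+c$ with $c>0$ by the parity of $N$.
\begin{itemize}
\item If $N$ is even, then $x^{N}+c\geq c>0$ for every real $x$, so there is no real zero.
\item If $N$ is odd, then $x\mapsto x^{N}+c$ is strictly increasing on $\mathbb{R}$ and tends to $\mp\infty$ as $x\to\mp\infty$. Hence it has exactly one real zero, namely $x=-c^{1/N}<0$.
\end{itemize}
This is the same argument as in the preceding Proposition on $h(x)$, and it also accounts for the stated count there.

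\textbf{Step 2.} Split into cases on $p$.
\begin{itemize}
\item For an odd prime $p$, $p^{m-1}$ is odd, so $N=p^{m-1}-1$ is even. By Step 1, $h$ has no real zero, and $D_{i}$ has exactly one real zero, $x=0$.
\item For $p=2$, $N=2^{m-1}-1$ is odd. By Step 1, $h$ has exactly one real zero, $-\bigl(2^{m-1}\bigr)^{1/(2^{m-1}-1)}$.
\end{itemize}

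\textbf{Main obstacle.} The difficulty is reconciling the case $p=2$ with the wording ``only one real zero''. In that case $D_{i}$ has the simple zero $0$ and one negative real zero. The count ``one'' is then correct only if, as in the preceding Proposition, one counts the zeros of the nontrivial factor $h(x)$. The trivial zero $x=0$ is set aside under this convention, since the paper has already observed that it is always a zero.

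The proof would therefore state explicitly what it establishes. With the trivial zero $x=0$ set aside, $D_{i}$ has at most one further real zero. That further zero is absent for odd $p$, and for $p=2$ it is exactly one, namely $-\bigl(2^{m-1}\bigr)^{1/(2^{m-1}-1)}$. Read literally, the statement holds for odd $p$, where $0$ is the only real zero. For $p=2$ it holds only under the convention of counting the zeros of $h$.

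I would also point out that the exponent in $h$ is $p^{m-1}-1$, not $p^{m-2}$. The parity check must use $p^{m-1}-1$, because that exponent is what decides whether the nontrivial real zero exists.
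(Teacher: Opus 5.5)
Your argument is correct and follows the route the paper intends: factor out $x$ and read off the real zeros of the binomial $h$ from the parity of its exponent. You carry it out correctly, and the paper does not. The paper takes $h(x)=p^{m}-p^{m-1}+x^{p^{m-2}}$ and concludes that $h$ has one real zero because $p^{m-2}$ is odd for $p\geq 3$. With the correct exponent $p^{m-1}-1$ from Corollary \ref{second consequence}(i), the parity is reversed. For odd $p$ you get $h(x)\geq p^{m-1}(p-1)>0$ on $\mathbb{R}$, and $h$ has exactly one real zero only when $p=2$, a case the paper's proof never treats. Even taken at face value, the paper's claim about $h$ would give $D_{i}=x\,h(x)$ two real zeros, not one. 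So the paper's argument does not establish the theorem, and your case split is the accurate result: exactly one real zero ($x=0$) for odd $p$, and exactly two ($0$ and $-\bigl(2^{m-1}\bigr)^{1/(2^{m-1}-1)}$) for $p=2$.

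The one thing to change is the framing. Do not present the $p=2$ case as holding ``under a convention''. No single reading rescues the statement for all $p\geq 2$: if you discard $x=0$ and count only the zeros of $h$, the odd-$p$ case gives zero rather than one. State plainly that the theorem is false for $p=2$, and give the smallest counterexample, $n=8$. There $D_{i}(\Gamma(\mathbb{Z}_{8}),x)=4x+x^{4}=x(x^{3}+4)$, which has the two real zeros $0$ and $-4^{1/3}$. For the same reason, qualify your remark in Step 1 that the parity argument ``accounts for the stated count'' in the preceding Proposition. That count of one is correct only for $p=2$, the very case its proof excludes.
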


We illustrate the Corollary \ref{second consequence} with the help of the following example.\\
\noindent\textbf{Example 2.} Let $ \Gamma(\mathbb{Z}_{n}) $ be the comaximal graph of order $ n\geq 2 $. Then, we have the following cases.\\
(1). Let $ n=2^4=16$. Then the proper divisor of $ n $ are $ 2,4 $ and $ 8. $ Recall that the elements of $ A_{d_{i}} $ are of the form $ k\cdot d_{i}, $ where $ \left(k,\frac{n}{d_{i}}\right)=1,  $ then the $ A_{d_{i}} $ sets are
\begin{align*}
A_{2}=\{2,6,10,14\}, ~ A_{4}=\{4,12\}~ \text{and} ~ A_{8}=\{8\}.
\end{align*}
Also, vertices of $ A_{d_{i}} $'s  are not connected to each other, since the proper divisors $ 2,4 $ and $ 8 $ are not pairwise relatively prime. Thus, $ V_{2}=\{0\}\cup A_{2}\cup A_{4}\cup A_{8} $ form the independent set of cardinality $ 8 $ with each vertex having degree $ 8 $ in $ G\cong \Gamma(\mathbb{Z}_{16}) $. Besides, there are $ \phi(16)=8 $ vertices of degree $ 16 $, which constitute the vertex set  $ V_{1} $. Thus, each single vertex of $ V_{1} $ is the independent dominating vertex of $ G $. Also, $ V_{2} $ is the another independent dominating set in $ G $, see Figure \ref{comaximal graph Fig 3}. Hence, the independent domination polynomial of $ G $ is
\[ D_{i}(G,x)=8x+x^{8}. \]
\begin{figure}[H]
	\centerline{\scalebox{.28}{\includegraphics{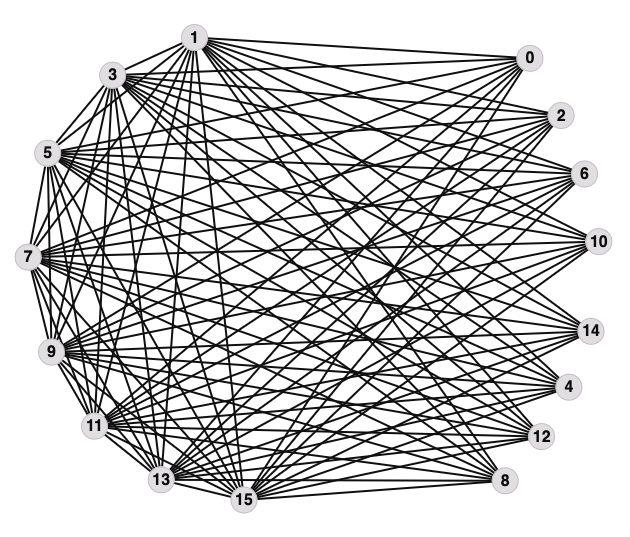}}\scalebox{.28}{\includegraphics{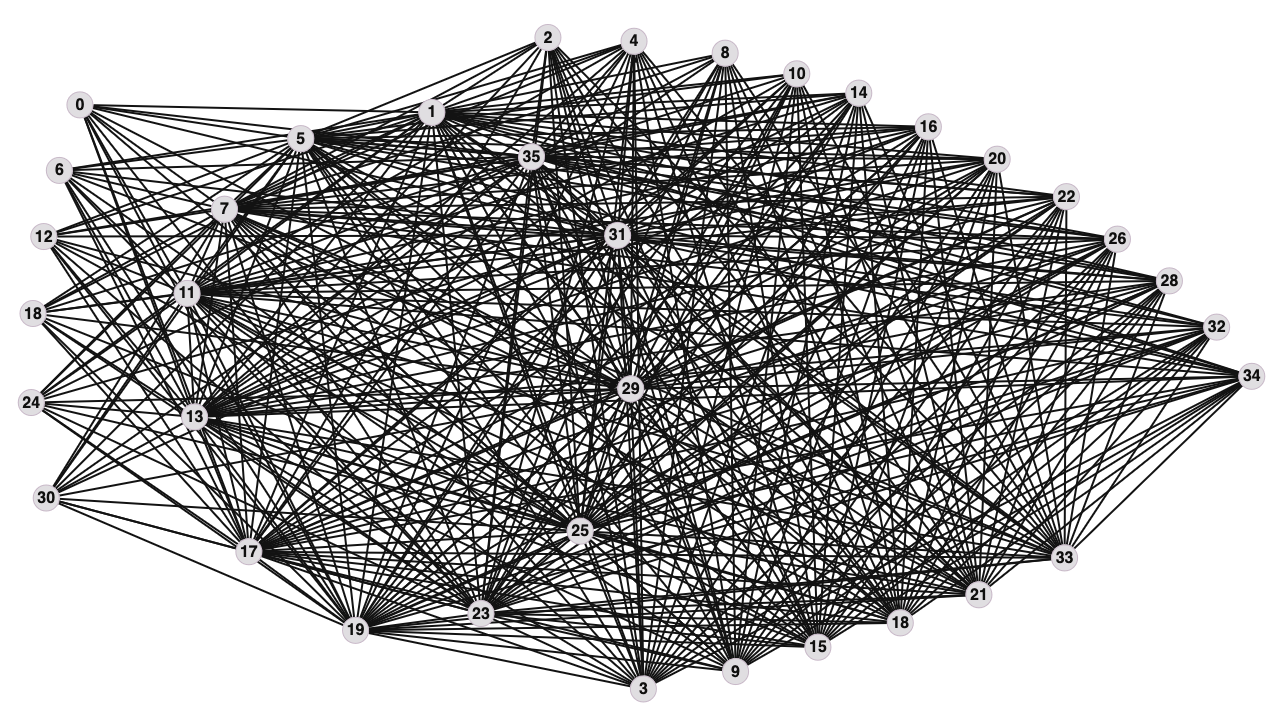}}}
	\caption{Comaximal graphs of $ \mathbb{Z}_{16} $ and $ \mathbb{Z}_{36} $}
	\label{comaximal graph Fig 3}
\end{figure}
\noindent(2). For $ n=36=2^{2}\cdot 3^{2} $, the proper divisors of $ n $ are $ 2, 2^{2}, 3, 3^{2}, 2\cdot 3, 2\cdot 3^{2} $ and $ 2^{2}\cdot 3. $ The corresponding $ A_{d_{i}} $ sets are:
\begin{align*}
A_{2}&=\{2,10,14,22,26,34\},~ A_{4}=\{4,8,16,20,28,32\},~ A_{3}=\{3,15,21,33\},\\
 A_{9}&=\{9,18\},~ A_{6}=\{6,30\},~ A_{12}=\{12,24\},~ A_{18}=\{18\}.
\end{align*}
By definition of $ G_{2} $, $ A_{6}\cup A_{12}\cup A_{18} $ form the isolated set, while $ A_{2}\cup A_{4} $ and $ A_{3}\cup A_{9} $ form the complete bipartite graph $ K_{12,6} $. The structure of $ G\cong  \Gamma(\mathbb{Z}_{36})$ is shown in Figure \ref{comaximal graph Fig 3}. Let $ V_{1} $ be the set of vertices of degree $ 36, $ $ V_{2}=\{0\}\cup A_{6}\cup A_{12}\cup A_{18}, ~ V_{3}= A_{2}\cup A_{4} $ and $ V_{4}=A_{3}\cup A_{9} $. Therefore, the independent domination polynomial of $ G $ is 
\[  D_{i}(G,x)=12x+x^{12}+x^{18}. \]
Similarly, (iii) of Corollary \ref{second consequence} can be worked out.

A finite sequence of real numbers $S= \{s_{0},s_{1},\dots,s_{n}\} $ is said to be \emph{unimodal} if there exists a positive integer  $ t ~(0\leq t\leq n), $ called the \emph{mode} of  $ S $ such that $ s_{i} $ increases up to $ i=t $ and decreases from then on, that is, 
$$ s_{0}\leq s_{1}\leq \dots \leq s_{t} \quad \text{and} \quad s_{t}\geq s_{t+1}\geq \dots \geq s_{n}. $$ A polynomial $ p(x)=\sum_{i=0}^{n} a_{i}x^{i} $ is said to be unimodal, if its sequence of coefficients $ \{a_{i}\} $ is unimodal. Equivalently, $p(x)$ is  unimodal if its coefficient sequence has a single peak, meaning that it rises to a certain point before falling.
 We note that the cases $ t= 0 $ and  $t = n $ allows the sequence to be decreasing and increasing, respectively. So, in such cases $p(x)$ is unimodal. A polynomial $ p(x) $ is symmetric if $ a_{i}=a_{n-i}, $ for $ 0\leq i\leq n $ and \emph{log-concave} if 
\begin{equation}\label{log con 1}
	 a_{i}^{2}\geq a_{i-1}a_{i+1},~  \text{for all}~ 1\leq i\leq n-1. 
\end{equation} Suppose that $ a_{i} $'s are non negative numbers and $ p(x) $ has only real zeros. Then the basic approach for unimodality and log-concave \cite{hardy} is the Newton's inequalities:
\begin{equation}\label{log con 2}
	 a_{i}^{2}\geq a_{i-1}a_{i+1}\left(1+\frac{1}{i}\right)\left(1+\frac{1}{n-i}\right),~ i=1,2,\dots,n-1. 
\end{equation}
Clearly, $ 3 + 8x + 9x^{2} + 50x^{3} + 7x^{4} + 3x^{5} $ is unimodal while $ 14 + 7x + 9x^{2} + 7x^{3} + 3x^{4} $ is not unimodal.

The number of oscillations $ \eta(p(x)) $ in the sequence of coefficients $ a_{i} $'s of $ p(x) $ is defined as the number of changes of directions (increasing vs decreasing) in the sequence. If $ p(x) = 171 + 151x + 180x^{2} + 13x^{3} +5x^{4} + 2x^{5} $, then $ \eta(p(x))=2, $ as change of decreasing directions are two. Thus, it immediately follows that  a polynomial $ p(x) $  is unimodal if its oscillation is at most one, that is, $ \eta(p(x))\leq 1. $ 

Next,  we presents results related to unimodal and log-concave property of the independent domination polynomial of comaximal graph of ring $\mathbb{Z}_{n}.$

\begin{theorem} Let $G\cong \Gamma(\mathbb{Z}_{n})$ be the comaximal graph of order $n$. Then the followings hold.
	\begin{itemize}
		\item[\bf (i)] If $ n=p^{m}, $  $  p $ is a prime and $ m\geq 1 $ is a positive integer, then $D_{i}(G,x)$ is unimodal if and only if $m=1 $ or $m=p=2$ and it is log-concave if and only if $p\neq3$ and $m\neq 2.$
		\item[\bf (ii)] If $ n=p^{n_{1}}q^{n_{2}}, $ with primes $p<q $ and $ n_{i}\geq 1 $ are positive integer, then $D_{i}(G,x)$ is unimodal if and only if $n_{1}=n_{2}=1, p=2 $ and $q=3.$ 
		Also, $D_{i}(G,x)$ is not log-concave if and only if $n_{1}=n_{2}=1, p=3$ and $q$ is arbitrary or $n_{1}=n_{2}=1, p-q=2$ or $n_{1}=2, n_{2}=1, p=2$ and $q=3.$
	\end{itemize}
\end{theorem}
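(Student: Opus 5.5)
The plan is to read off the coefficient sequences directly from the closed forms already obtained, and then decide unimodality and log-concavity by locating the internal zeros of each sequence. For $n=p$ we use (i) of Corollary \ref{first consequence}, which gives $D_i(G,x)=px$. For $n=p^m$ with $m\ge2$ we use (i) of Corollary \ref{second consequence}, which gives $(p^m-p^{m-1})x+x^{p^{m-1}}$. For $n=p^{n_1}q^{n_2}$ we use (ii) of Corollary \ref{first consequence} when $n_1=n_2=1$, and (ii) of Corollary \ref{second consequence} in general. In every case $D_i(G,x)$ is a sum of very few monomials. Its coefficient sequence is therefore $a_0=0$, $a_1=\phi(n)$, then a block of zeros, then isolated $1$'s at the exponents $e_1<e_2$, where $e_1,e_2$ are $p^{m-1}$, or $\{p,q\}$, or $\{p^{n_1}q^{n_2-1},\,p^{n_1-1}q^{n_2}\}$. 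No Newton-inequality machinery is needed; everything reduces to elementary facts about sparse sequences.

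The first key step is a general lemma about such sparse nonnegative sequences. Unimodality fails as soon as some zero coefficient is strictly flanked by positive coefficients on both sides. So the sequence is unimodal if and only if its support $\{1,e_1,e_2,\dots\}$ is an interval of consecutive integers and the values on it rise and then fall. For log-concavity, a triple $(a_{i-1},a_i,a_{i+1})$ violates $a_i^2\ge a_{i-1}a_{i+1}$ exactly in two situations:
\begin{itemize}
\item $a_i=0$ while $a_{i\pm1}>0$, that is, there are two support points at distance exactly $2$ with nothing in between;
\item all three entries are positive and $a_i^2<a_{i-1}a_{i+1}$.
\end{itemize}

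For (i), the support is $\{1,p^{m-1}\}$. Contiguity forces $p^{m-1}\in\{1,2\}$, which gives $m=1$ or $p=m=2$; in the latter case the sequence $0,2,1$ is decreasing after the mode. A gap of exactly $2$ occurs iff $p^{m-1}=3$, that is, $p=3$ and $m=2$, and this is the only log-concavity obstruction. For (ii), with $n_1=n_2=1$ the support is $\{1,p,q\}$. Contiguity forces $p=2$ and $q=3$, giving the sequence $0,2,1,1$. Otherwise an internal zero appears, since for larger exponents $p^{n_1}q^{n_2-1}\ge 4$. Distance-$2$ gaps arise when $p=3$ (the pair $1,3$), when $q-p=2$, and in the general case when $\bigl|p^{n_1}q^{n_2-1}-p^{n_1-1}q^{n_2}\bigr|=p^{n_1-1}q^{n_2-1}(q-p)=2$ or when the smaller exponent equals $3$. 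We then solve these small Diophantine conditions; for instance, $n_1=2$, $n_2=1$, $p=2$, $q=3$ gives exponents $4$ and $6$. For each solution we also check the positive consecutive triples.

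The main obstacle is the exhaustive case analysis in (ii), together with reconciling it with the stated list. Solving $p^{n_1-1}q^{n_2-1}(q-p)=2$ and the condition ``smaller exponent $=3$'' over all $n_1,n_2\ge1$ is routine but easy to get wrong. More seriously, the triple check at $p=2$, $q=3$ gives $a_2^2=1<a_1a_3=2$. This suggests that this case is also not log-concave, and that the characterization may need adjusting; I would verify this numerically first. Similarly, the log-concavity condition in (i) should presumably be read as ``not both $p=3$ and $m=2$''. I would state and prove the corrected precise characterization that the case analysis produces.
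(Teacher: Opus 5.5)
Your approach matches the paper's, and your unimodality conclusions agree with it. You read the sparse coefficient sequence off Corollaries~\ref{first consequence} and~\ref{second consequence}, with support $\{1,p^{m-1}\}$, resp.\ $\{1,Np,Nq\}$ where $N=p^{n_1-1}q^{n_2-1}$, and then decide unimodality and log-concavity from the gaps.

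The one difference is that the paper only looks for a single missing coefficient between consecutive support points. It never checks a triple of consecutive positive coefficients. Your extra check exposes a genuine error in the statement, so there is no need to hedge. The independent dominating sets of $\Gamma(\mathbb{Z}_6)$ are exactly $\{1\},\{5\},\{0,3\},\{0,2,4\}$. Hence $D_i(\Gamma(\mathbb{Z}_6),x)=2x+x^2+x^3$, with $a_2^2=1<2=a_1a_3$. This non-log-concave case is missing from the stated list. The paper even writes this polynomial down in its unimodality converse without testing log-concavity.

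Your Diophantine step finishes quickly:
\begin{itemize}
\item $Np=3$ forces $N=1$ and $p=3$.
\item $N(q-p)=2$ forces either $N=1$ and $q-p=2$ (the statement's ``$p-q=2$'' is a typo), or $N=2$ and $q-p=1$, i.e.\ $n=12$.
\end{itemize}
So the correct characterization in (ii) is the stated list together with $n=6$.

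In (i), the paper's argument, like yours, shows that the only obstruction is $p^{m-1}=3$. The condition must therefore be read as $(p,m)\neq(3,2)$. Taken literally, ``$p\neq 3$ and $m\neq 2$'' already fails for $n=3,4,27$.
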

\noindent\begin{proof}
	If $m=1$, then from \ref{second consequence}, its follows that $D_{i}(G,x)=px$ is unimodal. For $p=m=2$, we get $$D_{i}(G,x)=(p^{m}-p^{m-1})x+x^{p^{m-1}}=2x+x^{2},$$ which is clearly unimodal. However, if $p^{m-1}\geq 3$, then there is at least one term missing between $x$ and $ x^{p^{m-1}},$ so there are exactly two increasing oscillation   and unimodal property ceases. Again, if there is exactly one term missing between $x$ and $ x^{p^{m-1}},$ then $ D_{i}(G,x)$ cannot be log-concave. This happens if and only if $ p^{m-1}=3$, that is, $p=3$ and $m=2$. In this case $D_{i}(G,x)=6x+x^{3} $, there is $a_{2}=0$ such that $0\ngeq a_{1}a_{3}$. Thus, $D_{i}(G,x)$ is log-concave if and only if $p\neq 3$ and $m\neq 2$.\\
	(ii) For $n=p^{n_{1}}q^{n_{2}}, $ with primes $p<q $ and $ n_{i}\geq 1 $ are positive integer, the independent domination polynomial
	$D_{i}(G,x)=\phi(n)x+x^{p^{n_{1}}q^{n_{2}-1}}+x^{p^{n_{1}-1}q^{n_{2}}}$ is unimodal if and only if difference between the exponents of $x $ and $ x^{p^{n_{1}}q^{n_{2}-1}}$ is one and that of $x^{p^{n_{1}}q^{n_{2}-1}} $ and $x^{p^{n_{1}-1}q^{n_{2}}}$ is one, otherwise there exists zero terms between them and oscillations are more than one, thereby unimodal property fails. That is, same as saying $p^{n_{1}}q^{n_{2}-1}=2$ and $ p^{n_{1}-1}q^{n_{2}-1}\big( q-p \big)=1,$ which after simplification gives $n_{1}=n_{2}=1, p=2$ and $q$ is arbitrary, the second condition $p^{n_{1}-1}q^{n_{2}-1}(q-p)=1$ implies that $ p=2$ and $q=3$. Together it follows that $p=2, q=3$ and $n_{1}=n_{2}=1$. and here the unimodal property is valid. Conversely, for $p=2, q=3$ and $n_{1}=n_{2}=1$, the independent domination polynomial is $2x+x^{2}+x^{3}$, which is clearly unimodal. The polynomial $D_{i}(G,x)$ is not log-concave if and only if there is exactly one term missing between the first two non-zero terms or between the last two non zero terms, that is, $p^{n_{1}}q^{n_{2}-1}=3$ or $p^{n_{1}-1}q^{n_{2}-1}(q-p)=2$. After simplifying first case, we obtain $n_{1}=n_{2}=1, p=3$ , and there is a zero term between $1$ and $p^{n_{1}}q^{n_{2}-1}$ such that log-concavity ceases. Also, from the second condition, we get $n_{1}=n_{2}=1, p-q=2$ or $n_{1}=2, n_{2}=1, p=2$ and $q=3.$ For this condition there is a term such that $a_{p^{n_{1}}q^{n_{2}-1}+1}=0\ngeq a_{p^{n_{1}}q^{n_{2}-1}} a_{p^{n_{1}-1}q^{n_{2}}}$, so log-concavity fails. Conversely, with these conditions, it is easy to verify that log-concavity is not true.
\end{proof}

\section{Independence polynomial of comaximal graphs of $ \mathbb{Z}_{n} $}\label{section 3}
\paragraph{}
In this section, we discuss the independence polynomial of comaximal graphs of commutative ring $ \mathbb{Z}_{n} $ for various values of $ n. $ We recall some basic facts about the polynomial $I(G,x).$
 It is well known that the independence polynomial of $ K_{n}, n\geq 2 $ is $ 1+nx, $ the independence polynomial of $ K_{1} $ is $ 1+x $ and hence that of $ nK_{1} $ is $ (1+x)^{n}. $
We recall that the independence polynomials of graph operations $ G_{1}\cup G_{2} $ and $ G_{1}\vee G_{2} $ (see, \cite{gutman}) satisfy the following
\begin{equation}\label{ind poly of union}
	I(G_{1}\cup G_{2},x)=I(G_{1},x)I(G_{2},x),
\end{equation}
\begin{equation}\label{ind poly of join}
	I(G_{1}\vee G_{2},x)=I(G_{1},x)+I(G_{2},x)-1.
\end{equation}

The following result gives the independence polynomial of $ \mathbb{Z}_{n}, n\geq 2 $ for some values of $ n. $
\begin{theorem}\label{independent poly of zn}
Let $ \Gamma(\mathbb{Z}_{n}) $ be the comaximal graph of $ \mathbb{Z}_{n}, n\geq 2 $. Then the following hold.
\begin{itemize}
	\item[\bf (i)] The independence polynomial of $ \Gamma(\mathbb{Z}_{n}) $ for  $ n=p $ is   $I( \Gamma(\mathbb{Z}_{n}),x)=1+px, $ where $ p $ is prime.
	\item[\bf (ii)] For $ n=pq,$  where $ (p<q) $ are primes, the independence polynomial of $ \Gamma(\mathbb{Z}_{n}) $ is 
	\[ I(\Gamma(\mathbb{Z}_{n}),x)=\big(\phi(n)-1\big)x+(1+x)^{p}+(1+x)^{q}-1.\]
	\item[\bf (iii)] For $ n=p^{m}$  where $ p $ is prime and $ m $ is positive integer, the independence polynomial of $ \Gamma(\mathbb{Z}_{n}) $ is 
	\[ I(\Gamma(\mathbb{Z}_{n}),x)=\phi(n)x+(1+x)^{p^{m-1}}.\]
	\item[\bf (iv)] For $ n=p^{n_{1}}q^{n_{2}},$  where $ (p<q) $ are primes and $ n_{1}, n_{2} $ are positive integers, the independence polynomial of $ \Gamma(\mathbb{Z}_{n}) $ is 
	\[ I(\Gamma(\mathbb{Z}_{n}),x)=\phi(n)x+(1+x)^{p^{2(n_{1}-1)}q^{2(n_{2}-1)}(q-1)}+(1+x)^{p^{2(n_{1}-1)}q^{2(n_{2}-1)}(p-1)}-(1+x)^{p^{n_{1}-1}q^{n_{2}-1}}.\]
\end{itemize}
\end{theorem}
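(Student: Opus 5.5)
The plan is to compute everything from the decomposition $\Gamma(\mathbb{Z}_{n})=K_{\phi(n)}\vee\left(K_{1}\cup G_{2}\right)$ of Lemma \ref{joined union of comaximal graph}, using the product rule \eqref{ind poly of union} for disjoint unions and the join rule \eqref{ind poly of join}, together with the base facts $I(K_{m},x)=1+mx$ and $I(\overline{K}_{m},x)=(1+x)^{m}$. I will also need one auxiliary formula. In $K_{a,b}$ every independent set lies entirely in one side, and the two sides share only the empty set, so $I(K_{a,b},x)=(1+x)^{a}+(1+x)^{b}-1$; this is \eqref{ind poly of join} applied to $\overline{K}_{a}\vee\overline{K}_{b}$. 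With these, each part reduces to identifying $G_{2}$ and doing the algebra
\[
I(\Gamma(\mathbb{Z}_{n}),x)=(1+\phi(n)x)+(1+x)\,I(G_{2},x)-1=\phi(n)x+(1+x)\,I(G_{2},x).
\]

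For (i), $G_{2}$ is empty, so $\Gamma(\mathbb{Z}_{p})\cong K_{p}$ and the result is $1+px$. For (iii), the proof of Corollary \ref{second consequence}(i) shows that $K_{1}\cup G_{2}$ is edgeless on $1+\sum_{i=1}^{m-1}\phi(p^{i})=p^{m-1}$ vertices. This gives $\phi(n)x+(1+x)^{p^{m-1}}$. For (ii), $G_{2}=K_{p-1,q-1}$, with $A_{q}$ of size $p-1$ and $A_{p}$ of size $q-1$. Hence $(1+x)I(G_{2},x)=(1+x)^{p}+(1+x)^{q}-(1+x)$. To match the displayed form I rewrite $-(1+x)$ as $-1-x$ and absorb the $-x$ into the linear term, which gives $(\phi(n)-1)x+(1+x)^{p}+(1+x)^{q}-1$.

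For (iv), I use the structure found in the proof of Corollary \ref{second consequence}(ii). Put $s=p^{n_{1}-1}q^{n_{2}-1}$. Then $G_{2}=\overline{K}_{s-1}\cup K_{s(p-1),\,s(q-1)}$, so $K_{1}\cup G_{2}$ contributes $(1+x)^{s}\big[(1+x)^{s(p-1)}+(1+x)^{s(q-1)}-1\big]$. The join rule then gives $I=\phi(n)x+(1+x)^{sp}+(1+x)^{sq}-(1+x)^{s}$.

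The final step is to identify this with the exponents as printed in the statement, namely $s^{2}(q-1)$ and $s^{2}(p-1)$, and I expect this to be the main obstacle. Before attempting it I will run the consistency check $n_{1}=n_{2}=1$, where $s=1$. There the printed formula becomes $\phi(n)x+(1+x)^{q-1}+(1+x)^{p-1}-(1+x)$. This disagrees with part (ii), whose correctness is independent. For example, with $n=6$ the graph $\Gamma(\mathbb{Z}_{6})$ has two independent $3$-sets, $\{0,2,4\}$ and $\{0,3,\dots\}$ aside, so the cubic coefficient is nonzero, yet the printed (iv) has degree $\le 2$ there. So the identification cannot be carried out as written. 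The plan is therefore to prove (i)–(iii) as stated and to prove (iv) in the form $\phi(n)x+(1+x)^{p^{n_{1}-1}q^{n_{2}}}+(1+x)^{p^{n_{1}}q^{n_{2}-1}}-(1+x)^{p^{n_{1}-1}q^{n_{2}-1}}$. I will point out that the printed exponents appear to come from multiplying the sizes of the two parts by $s$ a second time, and from interchanging which part is multiplied by $(1+x)^{s}$. I cannot derive the printed version of (iv); it should be corrected rather than proved.
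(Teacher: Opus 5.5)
Your proposal is correct and follows the paper's route. The decomposition $K_{\phi(n)}\vee(K_{1}\cup G_{2})$ with the union and join rules gives (i)--(iii) exactly as you compute them. You are also right not to prove (iv) as printed: the paper's proof reaches your intermediate expression $(1+x)^{s}\bigl[(1+x)^{s(q-1)}+(1+x)^{s(p-1)}-1\bigr]$ with $s=p^{n_{1}-1}q^{n_{2}-1}$, but then writes $(1+x)^{s}(1+x)^{s(q-1)}$ as $(1+x)^{s^{2}(q-1)}$ (and likewise for $p$), multiplying exponents instead of adding them.

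Your corrected formula $\phi(n)x+(1+x)^{p^{n_{1}-1}q^{n_{2}}}+(1+x)^{p^{n_{1}}q^{n_{2}-1}}-(1+x)^{p^{n_{1}-1}q^{n_{2}-1}}$ is the right one. A quicker check than yours: at $n_{1}=n_{2}=1$ the printed version has $x$-coefficient $pq-2$, not the vertex count $pq$.

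Two small fixes to your write-up:
\begin{itemize}
\item $\Gamma(\mathbb{Z}_{6})$ has only one independent $3$-set, $\{0,2,4\}$, since $\{0,3\}$ is already maximal. Your conclusion that the cubic coefficient is nonzero still stands.
\item Nothing is interchanged in the paper's derivation: the factor $(1+x)^{s}$ multiplies the correct expression, and the exponent arithmetic is the only error.
\end{itemize}
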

\noindent\begin{proof}
	 (i) For $ n=p, $ $ \Gamma(\mathbb{Z}_{n}) $ is isomorphic to $ K_{p} $ and the result follows.\\
(ii) For $ n=pq $, $ \Gamma(\mathbb{Z}_{n})=K_{\phi(n)}\vee (K_{1}\cup K_{p-1,q-1}) $ and $ K_{p-1,q-1}=\overline{K}_{p-1}\vee \overline{K}_{q-1}. $ By \eqref{ind poly of join} and \eqref{ind poly of union}, the independence polynomial of $ \overline{K}_{p-1}\vee \overline{K}_{q-1} $ is $I(\overline{K}_{p-1},x)+I(\overline{K}_{q-1},x)=(1+x)^{p-1}+(1+x)^{q-1}-1 $ and that of $ K_{1}\cup \Big(\overline{K}_{p-1}\vee \overline{K}_{q-1}\Big) $ is $ (1+x)^{p}+(1+x)^{q}-(1+x). $ Again using \eqref{ind poly of join}, the independence polynomial of $ \Gamma(\mathbb{Z}_{n}) $ is
\begin{align*}
	I(\Gamma(\mathbb{Z}_{n}),x)&=1+\phi(n)x+ (1+x)^{p}+(1+x)^{q}-(1+x)-1\\
	&=\big(\phi(n)-1\big)x+(1+x)^{p}+(1+x)^{q}-1.
\end{align*}
(iii) For $ n=p^{m}, m\geq 2, $ the comaximal graph is $ \Gamma(\mathbb{Z}_{n})\cong K_{\phi(n)}\vee \overline{K}_{p^{m-1}} $ and by \eqref{ind poly of join} of $ \Gamma(\mathbb{Z}_{n}) $ is
\begin{align*}
	I(\Gamma(\mathbb{Z}_{n}),x)&=I(K_{\phi(n)},x)+I(\overline{K}_{p^{m-1}},x)-1=1+\phi(n)x+(1+x)^{p^{m-1}}-1\\
	&=\phi(n)x+(1+x)^{p^{m-1}}.
\end{align*}
(iv) For $ n=p^{n_{1}}q^{n_{2}}, $ $ \Gamma(\mathbb{Z}_{n})\cong K_{\phi(n)}\vee \Big(\overline{K}_{p^{n_{1}-1}q^{n_{2}-1}}\cup\big(\overline{K}_{p^{n_{1}-1}q^{n_{2}-1}(q-1)}\vee \overline{K}_{p^{n_{1}-1}q^{n_{2}-1}(p-1)}\big) \Big)  $ and by repeated applications of \eqref{ind poly of join} and \ref{ind poly of union}, the independence polynomial of $\overline{K}_{p^{n_{1}-1}q^{n_{2}-1}(q-1)}\vee \overline{K}_{p^{n_{1}-1}q^{n_{2}-1}(p-1)}$ is 
\[ (1+x)^{p^{n_{1}-1}q^{n_{2}-1}(q-1)}+(1+x)^{p^{n_{1}-1}q^{n_{2}-1}(p-1)}-1, \]
and that of $ \overline{K}_{p^{n_{1}-1}q^{n_{2}-1}}\cup\big(\overline{K}_{p^{n_{1}-1}q^{n_{2}-1}(q-1)}\vee \overline{K}_{p^{n_{1}-1}q^{n_{2}-1}(p-1)}\big)$ is
\[ (1+x)^{p^{n_{1}-1}q^{n_{2}-1}}\Big((1+x)^{p^{n_{1}-1}q^{n_{2}-1}(q-1)}+(1+x)^{p^{n_{1}-1}q^{n_{2}-1}(p-1)}-1\Big). \]
Finally, the independence polynomial of $  \Gamma(\mathbb{Z}_{n}) $ is
	\begin{align*}
	 I(\Gamma(\mathbb{Z}_{n})\,x)=&(1+x)^{p^{n_{1}-1}q^{n_{2}-1}}\Big((1+x)^{p^{n_{1}-1}q^{n_{2}-1}(q-1)}+(1+x)^{p^{n_{1}-1}q^{n_{2}-1}(p-1)}-1\Big)-1\\
	 &+1+\phi(n)x\\
	  =&\phi(n)x+(1+x)^{p^{2(n_{1}-1)}q^{2(n_{2}-1)}(q-1)}+(1+x)^{p^{2(n_{1}-1)}q^{2(n_{2}-1)}(p-1)}\\
	  &-(1+x)^{p^{n_{1}-1}q^{n_{2}-1}}.
\end{align*} 
\end{proof}

With similar idea, Theorem \ref{independent poly of zn} can be generalized for $ n=p^{n_{1}}p^{n_{2}}r^{n_{3}}, $ where $ n_{i} $'s are non negative integers and $ p,q,r $ are primes.  It is interesting to investigate the zeros of the independence polynomials of comaximal graphs. The umimodality and log-concavity of  $ I(\Gamma(R),x) $  remains an open topic for comaximal graphs of arbitrary rings. 

Similar to Theorem \ref{ind dom of G(zn)}, we have the following result for the independence polynomial for the comaximal graph of $\mathbb{Z}_{n}.$

\begin{theorem}\label{ind of G(zn)}
	Let $ G\cong \Gamma(\mathbb{Z}_n) $ be the comaximal graph of order $ n. $ Then the following hold.
	\begin{itemize}
		\item [\bf (i)] If $ n $ is product of distinct primes, then the independence polynomial of $ G $ is
		\[ I(G,x)=\phi(n)x+(1+x)I(G_{2},x), \]
		where $ I(G_{2},x) $ is the independence polynomial of $ G_{2} $.
		\item[\bf (ii)] If $ n=\prod_{i=1}^{k}p_{i}^{n_{i}}, $ then the independent domination polynomial of $ G $ is
		\[ I(G,x)=\phi(n)x+(1+x)^{p_{1}^{n_{1}-1}p_{2}^{n_{2}-1}\dots p_{k}^{n_{k}-1}}I(G_{2},x), \]
		where $ I(G_{2},x) $ is the independence polynomial of the  $ G_{2}. $
	\end{itemize}
\end{theorem}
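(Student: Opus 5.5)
The proof will mirror that of Theorem \ref{ind dom of G(zn)}, with the join and union identities \eqref{ind poly of join} and \eqref{ind poly of union} doing the bookkeeping. By Lemma \ref{joined union of comaximal graph}(ii),
\[ \Gamma(\mathbb{Z}_{n})=K_{\phi(n)}\vee \left(K_{1}\cup G_{2}\right). \]
Since $I(K_{\phi(n)},x)=1+\phi(n)x$, the join formula gives
\[ I(G,x)=1+\phi(n)x+I(K_{1}\cup G_{2},x)-1=\phi(n)x+I(K_{1}\cup G_{2},x). \]
Then the union formula gives $I(K_{1}\cup G_{2},x)=(1+x)I(G_{2},x)$. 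This is exactly part (i), and it holds for every $n$. The squarefree hypothesis only tells us that $G_{2}$ has no isolated vertices, because $G_{n}$ is connected by Theorem 5.1 of \cite{banerjeeMatrices}.

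For part (ii) I read $I(G_{2},x)$ as the independence polynomial of the non-isolated part of $K_{1}\cup G_{2}$. This matches the phrase ``the connected component $G_{2}$'' in Theorem \ref{ind dom of G(zn)}(ii), since otherwise the formula would double count. I would repeat the argument from Theorem \ref{ind dom of G(zn)}(ii). The elements of the ideal $\langle p_{1}\cdots p_{k}\rangle\setminus\{0\}$ are exactly the $x$ with $(x,n)=d$ for some proper divisor $d$ divisible by every $p_{i}$. Such a $d$ shares a prime with every other proper divisor, so $A_{d}$ is isolated in $G_{2}$ by property 1 of the $A_{d_{i}}$.

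Conversely, if $d$ misses some prime $p_{j}$, then $d$ is adjacent in $G_{n}$ to $p_{j}$, or to $p_{j}^{n_{j}}$, which is a proper divisor when $k\ge2$. So the isolated vertices of $K_{1}\cup G_{2}$ are exactly $\{0\}$ together with this ideal, and there are $n/(p_{1}\cdots p_{k})=p_{1}^{n_{1}-1}\cdots p_{k}^{n_{k}-1}$ of them. Writing $K_{1}\cup G_{2}=\overline{K}_{N}\cup G_{2}'$ with $N=\prod p_{i}^{n_{i}-1}$, the union formula gives $I=(1+x)^{N}I(G_{2}',x)$. Substituting this into the join formula above finishes the proof.

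The main obstacle is this interpretive and counting step in (ii): identifying precisely which vertices are isolated and making sure the exponent counts $\{0\}$ together with them. The case $k=1$ needs a separate remark. There every proper divisor is a power of $p$, so all of $G_{2}$ is isolated, $N=p^{m-1}$, and $I(G_{2}',x)=1$. This recovers Theorem \ref{independent poly of zn}(iii) and serves as a consistency check. I would also check the result against Theorem \ref{independent poly of zn}(ii) for $n=pq$.
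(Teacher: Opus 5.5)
Your proposal is correct and takes essentially the approach the paper intends: the paper gives no separate proof, only noting that the result is ``similar to Theorem \ref{ind dom of G(zn)}''. That argument is the decomposition $K_{\phi(n)}\vee(K_{1}\cup G_{2})$ together with the count of the $p_{1}^{n_{1}-1}\cdots p_{k}^{n_{k}-1}$ isolated vertices $\{0\}\cup\langle p_{1}\cdots p_{k}\rangle$, which you combine here with the join and union identities. Your reading of $G_{2}$ in (ii) as the non-isolated part is the one the paper uses (``the connected component $G_{2}$'' in Theorem \ref{ind dom of G(zn)}(ii)); your remarks that (i) holds for every $n$, and that vertices outside that ideal are never isolated, are correct and make your argument more complete than the paper's.
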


From, the above results of $I(G,x)$ for $G\cong \Gamma(\mathbb{Z}_{n})$, it follows that the independence polynomial of $\mathbb{Z}_{n}$ can be completely determined if we are able to find $ I(G_{2},x)$ for the graph $G_{2}$.  The structure of $G_{n}$ becomes very complex as  the value of $k$ increases  in $n=\prod_{i=1}^{k}p_{i}^{n_{i}}.$ So, it becomes quit technical and seems hard to find its explicit expression as we increase value of $k$ beyond $3.$

Now, for the zeros of the independence polynomial. We first consider $I( \Gamma(\mathbb{Z}_{p}),x)=1+px, $ which is clearly log-concave and unimodal. Also, its zeros are real. Now, for $n=pq$, the independence polynomial is
\[ I(\Gamma(\mathbb{Z}_{n}),x)=\big(\phi(n)-1\big)x+(1+x)^{p}+(1+x)^{q}-1.\]
Expending the above equation, we have
\begin{footnotesize}
	\begin{align}\label{eq 1}
	I(\Gamma(\mathbb{Z}_{n}),x)&= 1+pqx+\Bigg(\binom{p}{2}+\binom{q}{2}\Bigg)x^{2}+\Bigg(\binom{p}{3}+\binom{q}{3}\Bigg)x^{3}+\sum_{i=4}^{p}\binom{p}{i}+\sum_{i=4}^{q}\binom{q}{i}.
\end{align}
\end{footnotesize}
We note that $2pq\geq p(p-1)+q(q-1)$ is not true in general. However, for $p=2$ and $q=3$, it is true and  for $q\geq 3$, it is false. So, $I(\Gamma(\mathbb{Z}_{2\cdot 3}),x)$ is unimodal and log-concave. For $q\geq 3$, it is not unimodal. Also, it is not necessarily log-concave for $q\geq 3$ (can be seen for $q=7$). Thus, in general $I(\Gamma(\mathbb{Z}_{n}),x)$ is neither unimodal nor log-concave. It is interesting to characterize the values of $p$ and $q$ such that  $I(\Gamma(\mathbb{Z}_{n}),x)$ is neither unimodal nor log-concave.

We recall the famous Eneström-Kakeya theorem (see, \cite{barbeau}) for finding the region in plane for the zero of a polynomial. For a polynomial $p(x)=\sum_{i=0}^{n} \alpha_{i}x^{i}$ with positive coefficients,  its zeros lie in the annulus $a\leq |Z|\leq b,$
where 
$$a=\min \left \{\left|\frac{ \alpha_{i}}{ \alpha_{i+1}}\right| : 0\leq i\leq n-1  \right \} ~\text{and}~b=\max \left  \{\left|\frac{ \alpha_{i}}{ \alpha_{i+1}}\right| : 0\leq i\leq n-1  \right \}.$$
 From Equation \ref{eq 1}, it is clear that $0<a<1$ and $b=\frac{q(q-1)}{2q}=\frac{q-1}{2}.$ Therefore, the non-zero zeros of $I(\Gamma(\mathbb{Z}_{n}),x)$ given in \eqref{eq 1} lies in the region $0<|Z|<\frac{q-1}{2}.$ We illustrate it with the following example.\\
For $p=7$ and $q=11$, the independence polynomial is given by
\begin{align*}
	I(\Gamma(\mathbb{Z}_{7\cdot 11}),x)&=1 + 77 x + 76 x^2 + 200 x^3 + 365 x^4 + 483 x^5 + 469 x^6 + 331 x^7 + 
	165 x^8\\
	&\quad + 55 x^9 + 11 x^{10} + x^{11}.
\end{align*}
It is obvious that above polynomial is neither log-concave nor unimodal. The approximated zeros of the above polynomial are 
\begin{align*}
	-0.013152, &2.49122 \pm 0.488584 i, -1.98695 \pm 1.24354 i, -1.09244 \pm 1.50704 i,\\
	& -0.18907 \pm 1.25975 i, 0.266252 \pm 0.55771 i.
\end{align*}
The above zeros lie in $0<|Z|<5$. Pictorially, these zero are represented by blue dots in Figure \ref{Fig zeros}.
\begin{figure}[H]
	\centerline{\scalebox{.3}{\includegraphics{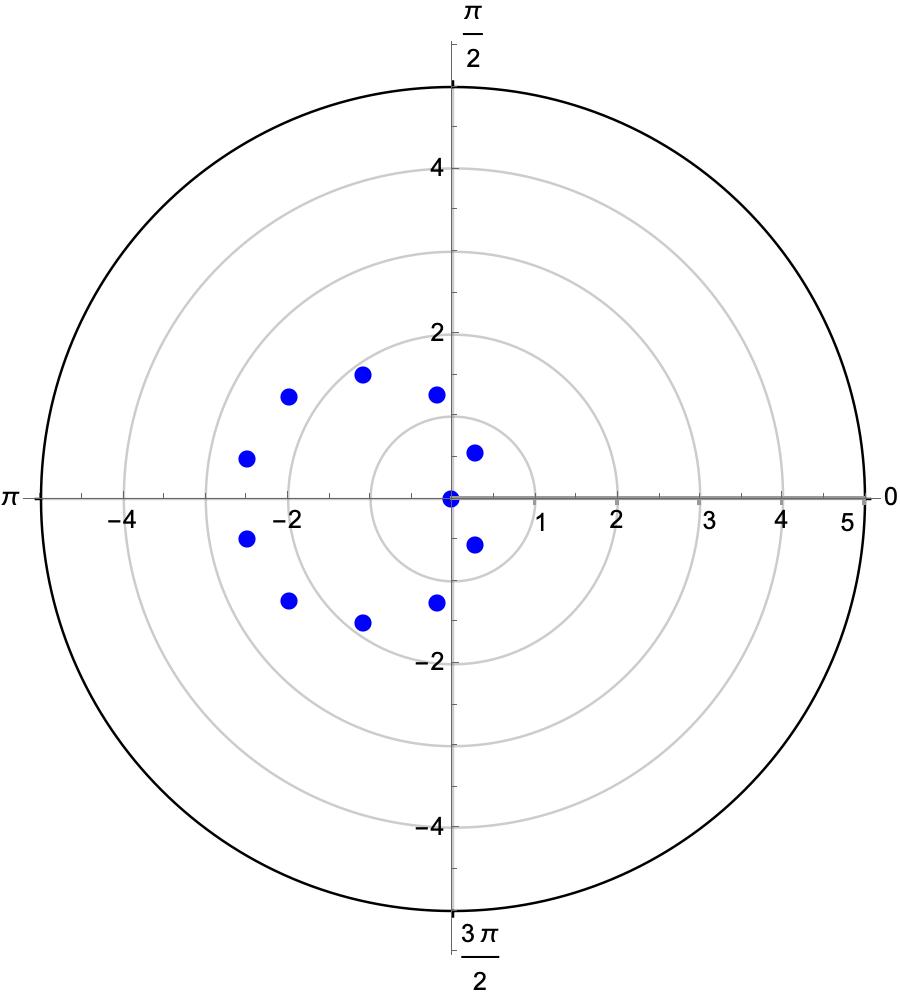}}\qquad\scalebox{.3}{\includegraphics{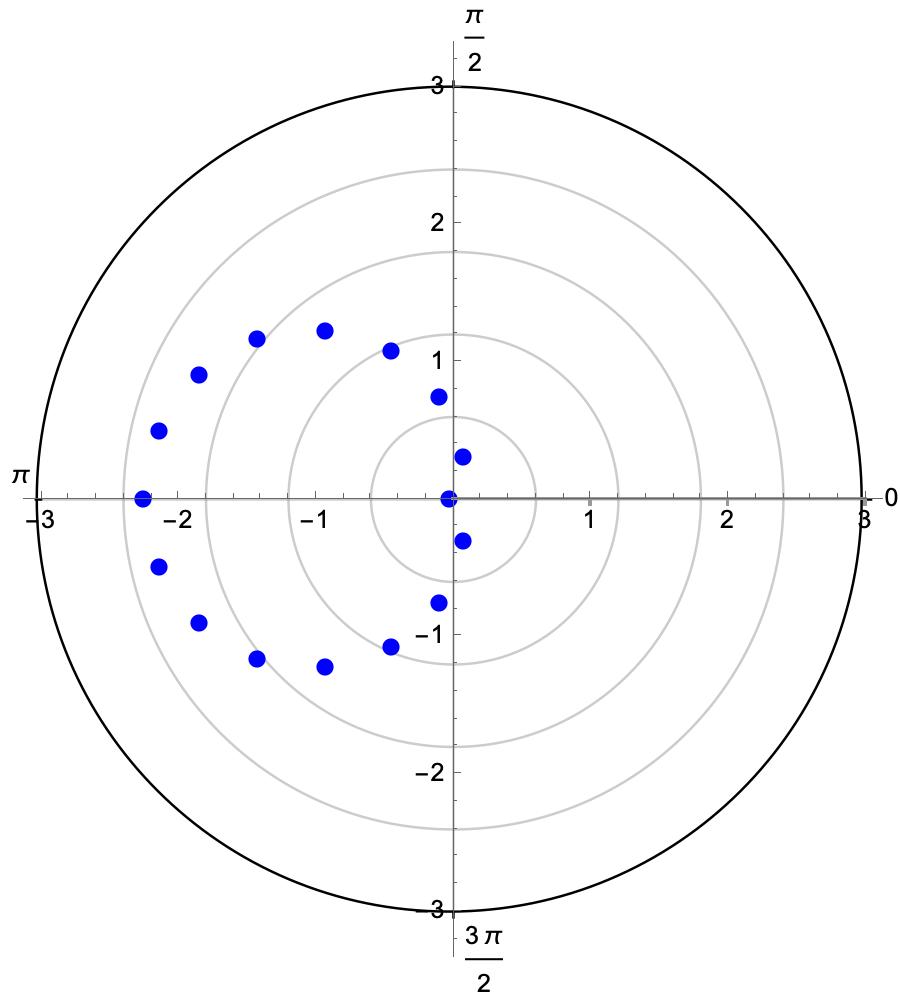}}}
	\caption{Representation of zeros of $I(\Gamma(\mathbb{Z}_{7\cdot 11}),x)$ and $I(\Gamma(\mathbb{Z}_{2^{5}}),x)$ on plane.}
	\label{Fig zeros}
\end{figure}

From Figure \ref{Fig zeros}, the zeros of $I(\Gamma(\mathbb{Z}_{7\cdot 11}),x)$ form an interesting pattern and  lie on boundary of some disc centred on negative x-axis. By computations, we observed this patter for large values of $p$ and $q.$  So, it will of interest to find the curve of the limit of zeros.

Next, for $I(\Gamma(\mathbb{Z}_{p^{m}}),x)=\phi(n)x+(1+x)^{p^{m-1}},$ we have
\begin{align*}
	I(\Gamma(\mathbb{Z}_{p^{m}}),x)=1+p^{m}x+\binom{p^{m-1}}{2}x^{2}+\binom{p^{m-1}}{3}x^{2}+\dots+x^{p^{m-1}}.
\end{align*}
It easily follows from the pattern of the binomial coefficients that the oscillation is exactly one and the polynomial is unimodal. However, it is not log-concave as 
\[  \binom{p^{m-1}}{2}^{2}\ngeq p^{m} \binom{p^{m-1}}{3}, \]
as can be verified for $p=2$. Also, by Eneström-Kakeya theorem, its zeros lie in $0<|Z|<\frac{p^{m-1}-1}{2}.$ For $2^{5}$, we have 
\begin{align*}
	I(\Gamma(\mathbb{Z}_{2^{5}}),x)&=1 + 32 x + 120 x^2 + 560 x^3 + 1820 x^4 + 4368 x^5 + 8008 x^6 + 
	11440 x^7 + 12870 x^8\\
	&\quad + 11440 x^9 + 8008 x^{10} + 4368 x^{11} + 
	1820 x^{12} + 560 x^{13} + 120 x^{14} + 16 x^{15} + x^{16},
\end{align*}
and its zeros are 
\begin{align*}
	-&2.25107, -0.0352171, -2.14742 \pm 0.494404 i, -1.85454 \pm  0.904433 i, -1.42351 \pm 1.16039 i,\\
	& -0.92986 \pm 1.2194 i, -0.460985 \pm 1.07302 i, -0.10302 \pm  0.749046 i, 0.0624826 \pm 0.307833 i.
\end{align*}
The zeros of above polynomial are shown in  Figure \ref{Fig zeros} right side and they lie in region $0<|Z|<7.5$. Also, we note that $I(\Gamma(\mathbb{Z}_{2^{5}}),x)$ is unimodal but not log-concave as $120^{2}\ngeq 32\cdot 560.$

\section{Conclusion}
The current articles present the results on the independence polynomial and the independent domination polynomial for comaximal graphs of the commutative ring $ \mathbb{Z}_{n}. $
The independent domination polynomials $ D_{i}(\Gamma(\mathbb{Z}_{n}),x) $ for $ n\in \{p^{n_{1}}, p^{n_{1}}q^{n_{2}}, p^{n_{1}}q^{n_{2}}r^{n_{3}}\}$ were presented explicitly. The polynomial $ D_{i}(\Gamma(\mathbb{Z}_{n}),x) $ is unimodal and log-concave with $ n\in \{p^{n_{1}}, p^{n_{1}}q^{n_{2}}\}$. Similarly, the independence polynomial $ I(\Gamma(\mathbb{Z}_{n}),x) $ were found for $ n\in \{p^{n_{1}}, p^{n_{1}}q^{n_{2}}\}. $
The problem of determining the independent domination polynomial/independence polynomial, as well as properties such as unimodality (log-concave), remains open for general values of $n. $ The most intriguing aspect is discovering relationships between these polynomials and their corresponding rings. The location of zeros (or their bounds) in such polynomials for general $n$ is a non-trivial remaining part.

\section*{Data Availability}
There is no data associated with this article.

\section*{Conflict of interest}
The authors declare that they have no competing interests.

\section*{Acknowledgement}
The author expresses sincere gratitude to the anonymous referees for their helpful comments and recommendations, which have improved the article's current quality.

\section*{Note} This article was accepted for publication in the journal ``Algebra Colloquium" on April 16, 2025, and is about to appear online in the second issue of 2026. Please direct any inquiries or comments at \href{bilalahmadrr@gmail.com}{bilalahmadrr@gmail.com}.

\end{document}